\setlist[enumerate,1]{label=(\roman*)}
\theoremstyle{plain}
\newtheorem{thm}{Theorem}[section]
\newtheorem{lem}[thm]{Lemma}
\newtheorem{prop}[thm]{Proposition}
\theoremstyle{definition}
\newtheorem{df}[thm]{Definition}
\theoremstyle{remark}
\newtheorem{rem}[thm]{Remark}
\newcommand{\C}{\mathbb{C}}
\newcommand{\F}{\mathbb{F}}
\newcommand{\bbN}{\mathbb{N}}
\newcommand{\ZZ}{\mathbb{Z}}
\newcommand{\NN}{\mathbb{N}}
\newcommand{\QQ}{\mathbb{Q}}
\newcommand{\CC}{\mathbb{C}}
\newcommand{\FF}{\mathbb{F}}
\newcommand{\PP}{\mathbb{P}}
\newcommand{\Fp}{\FF_p}
\newcommand{\afk}{\mathfrak{a}}
\newcommand{\bfk}{\mathfrak{b}}
\newcommand{\cfk}{\mathfrak{c}}
\newcommand{\Afk}{\mathfrak{A}}
\newcommand{\Ecal}{\mathcal{E}}
\newcommand{\Ocal}{\mathcal{O}}
\newcommand{\Rcal}{\mathcal{R}}
\newcommand{\Scal}{\mathcal{S}}
\newcommand{\Tcal}{\mathcal{T}}
\newcommand{\ovl}{\overline}
\let\oldforall\forall
\renewcommand{\forall}{\oldforall \: }
\let\oldexist\exists
\renewcommand{\exists}{\oldexist \: }
\let\emptyset\varnothing
\newcommand{\Fq}{\FF_q}
\newcommand{\Fqst}{\FF_q^\times}
\newcommand{\T}{\theta}
\newcommand{\dep}{\operatorname{dep}}
\newcommand{\wt}{\operatorname{wt}}
\let\l\ell
\newcommand{\ww}[1]{\underline{#1}} 
\newcommand{\bfG}{\widehat{G}}
\newcommand{\bfe}{\widehat{e}}
\newcommand{\bfE}{\widehat{E}}
\newcommand{\bfzeta}{\widehat{\zeta}}
\newcommand{\bff}{\bm{f}}
\newcommand{\bfg}{\bm{g}}
\newcommand{\bfS}{\widehat{S}}
\newcommand{\bfz}{\mathbf{z}}
\newcommand{\bfy}{\mathbf{y}}
\newcommand{\Lamz}{\Lambda_{\bfz'}}
\newcommand{\ang}[1]{\langle #1 \rangle}
\newcommand{\myToC}{{
		\renewcommand{\contentsname}{}
		\@starttoc{toc}{\contentsname}
}}
\leaders\hbox{\,.\,}\hfil}
\title[On $q$-Shuffle Relations for Multiple Eisenstein Series of Arbitrary Rank]{On $q$-Shuffle Relations for Multiple Eisenstein Series of Arbitrary Rank in Positive Characteristic}
\author{Ting-Wei Chang, Song-Yun Chen, Fei-Jun Huang, and Hung-Chun Tsui}
\date{\today}
\subjclass[2020]{Primary 11R58, 11M36; Secondary 11M32, 11M38}
\keywords{Function field, Multiple zeta values, Multiple Eisenstein series}
\thanks{The first author is supported by the National Science and Technology Council grant no. 109-2115-M-007-017-MY5 and 113-2628-M-007-003.
	The latter three authors are supported by NSTC Grant Number 113-2628-M-007-004}
\begin{document}
	
\begin{abstract}
    In this paper, we define the multiple Eisenstein series of arbitrary rank in positive characteristic, with Thakur's multiple zeta values appearing as the “constant terms" of their expansions in terms of “multiple Goss sums”.
    We show that the multiple Eisenstein series satisfy the same $q$-shuffle relations as the multiple zeta values do, thereby lifting the relations from “values” to “functions”.
\end{abstract}

\maketitle
\tableofcontents

\section{Introduction}

\subsection{Multiple Zeta Values in Positive Characteristic}

Let $p$ be a prime number and $q$ be a power of $p$.
Let $A:=\Fq[\T]$ be the polynomial ring in the variable $\T$ over a finite field $\Fq$ of $q$ elements (with the prime field $\Fp$), and $K := \Fq(\T)$ be its field of fractions.
We let $K_\infty$ be the completion of $K$ with respect to the absolute value $|\cdot|$ at the infinite place normalized so that $|\T| = q$, and $\CC_\infty$ be the completion of a fixed algebraic closure of $K_\infty$.
For a positive integer $d$, we let $A_+,A_{+,d},A_{<d}$ be the subsets of $A$ consisting of all monic polynomials, monic polynomials of degree $d$, and polynomials of degree less than $d$, respectively.
Moreover, we define $A_{+,0}=\{1\}$.

In \cite{thakur2004function}, Thakur introduced the multiple zeta values in positive characteristic, which generalized the notion of Carlitz single zeta values \cite{carlitz1935oncertain} as follows. Let $\NN$ denote the set of positive integers.
For any $d\in\ZZ_{\ge 0}$ and non-empty index $\ww{a} = (a_1,\ldots,a_m)\in \NN^m$, we define the \textit{power sum}
\begin{equation} \label{eq-power-sum}
    S_d(\ww{a}):= \sum_{ \substack{f_1,\ldots,f_m\in A_{+} \\ d=\deg f_1>\cdots>\deg f_m\ge 0}} \frac{1}{f_1^{a_1}\cdots f_m^{a_m}}\in K
\end{equation}
and Thakur's \textit{multiple zeta value} 
\[
\zeta_A(\ww{a}) := \sum_{d=0}^\infty S_d(\ww{a}) = \sum_{\substack{f_1,\ldots,f_m\in A_+ \\ \deg f_1 > \cdots > \deg f_m \geq 0}}
\frac{1}{f_1^{a_1} \cdots f_m^{a_m}} \in K_\infty.
\]
Throughout this paper, the term “multiple zeta values” refers to Thakur’s multiple zeta values, unless otherwise specified.
The quantities $\wt(\ww{a}) := \sum_{i=1}^{m} a_i$ and $\dep(\ww{a}) := m$ are called the weight and depth of the presentation $\zeta_A(\ww{a})$, respectively.
It is known by \cite{thakur2009finite} that all these multiple zeta values are non-vanishing.
Moreover, Thakur \cite{thakur2010shuffle} showed the existence that the product of two multiple zeta values can be expressed as an $\Fp$-linear combination of multiple zeta values of the same weight, called the \textit{$q$-shuffle relations}.
In particular, this implies that the $\Fp$-vector space spanned by all multiple zeta values admits an $\Fp$-algebra structure. 
Furthermore, it was shown by Chang \cite{chang2014linear} that the $\ovl{K}$-algebra spanned by all multiple zeta values is a graded algebra, where $\ovl{K}$ is the algebraic closure of $K$ in $\CC_\infty$.

Regarding the existence of the $q$-shuffle product mentioned above, the first concrete example was due to Chen \cite{Chen2015}, who proved the following explicit formula:
For any $a,b \in \NN$,
\[
\zeta_A(a)\zeta_A(b)=\zeta_A(a,b)+\zeta_A(b,a)+\zeta_A(a+b)+\sum_{\substack{i+j=a+b \\ q-1\mid j}}\Delta^{i,j}_{a,b}\zeta_A(i,j),
\]
where $\Delta^{i,j}_{a,b} := (-1)^{a-1}\binom{j-1}{a-1}+(-1)^{b-1}\binom{j-1}{b-1}$ and $\binom{m}{n}$ denotes the usual binomial coefficient modulo $p$.
Later on, based on Chen's formula, Yamamoto gave the following (conjectural) explicit $q$-shuffle relations for the multiple zeta values, which were verified in Shi's PhD thesis \cite{Shi2018}.

\begin{df}\label{df-zeta-values-as-words}
    Let $\Scal$ be the free monoid generated by the set $\{x_k \mid k \in \NN\}$ and let $\Rcal$ be the $\FF_p$-vector space generated by $\Scal$. For a non-empty index $\ww{a}=(a_1,\ldots,a_m)\in\NN^m$, we define $x_{\ww{a}}:=x_{a_1}\cdots x_{a_m}$, and for $\ww{a}=\emptyset$, we define $x_\emptyset:=1$.
    An element $x_{\ww{a}}$ is called a \textit{word of depth $m$}.
    When $m \geq 2$, we put $x_{\ww{a}^-}:=x_{a_2}\cdots x_{a_m}$, and when $m=1$, we put $x_{\ww{a}^-}:=x_{\emptyset} = 1$.
    We define the \textit{$q$-shuffle product} $\ast$ on $\Rcal$ inductively on the sum of depths as follows:
    \begin{enumerate}
        \item For the empty word $x_\emptyset=1$ and any $\afk\in\Rcal$, define
        \[
        1*\afk=\afk*1=\afk.
        \]
        \item For non-empty indices $\ww{a}=(a_1,\ldots,a_m)\in\NN^m$ and $\ww{b}=(b_1,\ldots,b_n)\in\NN^n$, define
        \begin{multline*}
            x_{\ww{a}} * x_{\ww{b}}
            = x_{a_1} ( x_{\ww{a}^-} * x_{\ww{b}} ) 
            + x_{b_1} ( x_{\ww{a}} * x_{\ww{b}^-} ) 
            + x_{a_1 + b_1} ( x_{\ww{a}^-} * x_{\ww{b}^-} ) \\
            + \sum_{\substack{i+j = a_1 + b_1 \\ q-1 \mid  j}} 
            \Delta^{i,j}_{a_1,b_1} x_i ( ( x_{\ww{a}^-} * x_{\ww{b}^-} ) * x_j )
        \end{multline*}
        where
        \begin{enumerate}
            \item $\Delta^{i,j}_{a,b} := (-1)^{a-1}\binom{j-1}{a-1}+(-1)^{b-1}\binom{j-1}{b-1}$.
            \item $x \sum_i \epsilon_ix_i := \sum_i \epsilon_i (xx_i)$ for $\epsilon_i \in \Fp$ and $x,x_i \in \Scal$.
        \end{enumerate}
        \item Expand the product $\ast$ to the $\Fp$-vector space $\Rcal$ by the distributive law.
    \end{enumerate}
\end{df}

\begin{thm}[\cite{Shi2018}]\label{thm-zeta-alg-hom}
    For $d\in \NN$, let $\bfS_{<d}, \bfzeta_A:\Rcal \to K_\infty$ be the unique $\F_p$-linear maps satisfying
    \[
    \bfS_{<d}(1) := 1,
    \quad
    \bfS_{<d}(x_{\ww{a}})
    := S_{<d}(\ww{a}) 
    := \sum_{i=0}^{d-1} S_i(\ww{a}),
    \]
    and
    \[
    \bfzeta_A(1):=1,
    \quad
    \bfzeta_A (x_{\ww{a}})
    := \lim_{d\to \infty} \bfS_{<d}(x_{\ww{a}})
    = \zeta_A(\ww{a}).
    \]
    Then $\bfS_{<d}$ is an $\F_p$-algebra homomorphism.
    Consequently, $\bfzeta_A$ is also an $\F_p$-algebra homomorphism, i.e., 
    \[
    \bfzeta_A(x_{\ww{a}} \ast x_{\ww{b}})=\zeta_A(\ww{a})\zeta_A(\ww{b}).
    \]
\end{thm}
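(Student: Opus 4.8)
The plan is to prove the stronger finite statement that each $\bfS_{<d}$ is an $\Fp$-algebra homomorphism; the claim for $\bfzeta_A$ then follows by letting $d\to\infty$, since $\bfzeta_A(x_{\ww a})=\lim_d\bfS_{<d}(x_{\ww a})$ and the shuffle product is a finite sum, so it commutes with the limit. As $\bfS_{<d}$ is $\Fp$-linear and $\ast$ is $\Fp$-bilinear, it suffices to verify $\bfS_{<d}(x_{\ww a}\ast x_{\ww b})=\bfS_{<d}(x_{\ww a})\bfS_{<d}(x_{\ww b})$ for all words and all $d$. The computational engine will be the recursion
\[
\bfS_{<d}(x_k w)=\sum_{e=0}^{d-1}S_e(k)\,\bfS_{<e}(w),\qquad S_e(k):=\sum_{f\in A_{+,e}}f^{-k},
\]
valid for any word $w$ (with the conventions $\bfS_{<0}(1)=1$ and $\bfS_{<0}(w)=0$ for $w\ne 1$), which I would obtain by classifying the tuples summed in $S_{<d}(k,\ww c)$ according to the degree $e$ of their top variable.

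I would then induct on the total depth $N:=\dep(\ww a)+\dep(\ww b)$, the case where one word is empty being immediate. For $\ww a=(a_1,\ww a^-)$ and $\ww b=(b_1,\ww b^-)$ both non-empty, applying the recursion to both factors gives
\[
\bfS_{<d}(x_{\ww a})\bfS_{<d}(x_{\ww b})=\sum_{e,e'=0}^{d-1}S_e(a_1)\,S_{e'}(b_1)\,\bfS_{<e}(x_{\ww a^-})\,\bfS_{<e'}(x_{\ww b^-}),
\]
and I would split the double sum according to whether $e>e'$, $e<e'$, or $e=e'$; these record the comparison of the degrees of the two leading variables. Using the induction hypothesis at total depth $N-1$ to collapse $\bfS_{<e}(x_{\ww a^-})\bfS_{<e}(x_{\ww b})=\bfS_{<e}(x_{\ww a^-}\ast x_{\ww b})$ and then re-applying the recursion, the range $e>e'$ reassembles exactly into $\bfS_{<d}(x_{a_1}(x_{\ww a^-}\ast x_{\ww b}))$, and symmetrically $e<e'$ yields $\bfS_{<d}(x_{b_1}(x_{\ww a}\ast x_{\ww b^-}))$. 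These are the first two $q$-shuffle terms.

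For the diagonal $e=e'$ I would split $S_e(a_1)S_e(b_1)=S_e(a_1+b_1)+\sum_{f\ne g\in A_{+,e}}f^{-a_1}g^{-b_1}$ according to whether the two degree-$e$ variables coincide. The coincident part $\sum_e S_e(a_1+b_1)\bfS_{<e}(x_{\ww a^-}\ast x_{\ww b^-})$ (using the hypothesis at depth $N-2$) reassembles via the recursion into the third shuffle term $\bfS_{<d}(x_{a_1+b_1}(x_{\ww a^-}\ast x_{\ww b^-}))$. The genuinely new ingredient is the distinct part, for which I would establish the key identity
\[
\sum_{\substack{f,g\in A_{+,e}\\ f\ne g}}\frac{1}{f^a g^b}=\sum_{\substack{i+j=a+b\\ (q-1)\mid j}}\Delta^{i,j}_{a,b}\,S_e(i)\,S_{<e}(j).
\]
Its proof is the Chen-type computation: first partial-fraction $\tfrac{1}{f^a g^b}=\sum_{i=1}^a(-1)^b\binom{a+b-i-1}{b-1}(f-g)^{i-a-b}f^{-i}+(\text{symmetric }a\leftrightarrow b)$; then for fixed $f$ the substitution $g\mapsto f-g=:h$ lets the inner sum run over all $0\ne h\in A_{<e}$, and writing $h=c\,h_0$ with $h_0$ monic produces the character sum $\sum_{c\in\Fqst}c^{-j}$, which equals $-1$ when $(q-1)\mid j$ and $0$ otherwise. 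This simultaneously forces the divisibility condition, produces the factor $S_{<e}(j)$, and—after noting $(-1)^{b+1}=(-1)^{b-1}$—matches the coefficient exactly to $\Delta^{i,j}_{a,b}$.

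Feeding the key identity into the diagonal sum, the factor $S_{<e}(j)=\bfS_{<e}(x_j)$ combines with $\bfS_{<e}(x_{\ww a^-}\ast x_{\ww b^-})$ through the induction hypothesis (applied to the pair $(x_{\ww a^-}\ast x_{\ww b^-},\,x_j)$, of total depth $N-1$) to give $\bfS_{<e}((x_{\ww a^-}\ast x_{\ww b^-})\ast x_j)$, and one further application of the recursion produces the fourth shuffle term $\sum\Delta^{i,j}_{a_1,b_1}\bfS_{<d}(x_i((x_{\ww a^-}\ast x_{\ww b^-})\ast x_j))$. I expect the main obstacle to be this key identity: both the exact partial-fraction coefficients and their reduction modulo $p$ to the prescribed $\Delta^{i,j}_{a,b}$ must be tracked carefully, and one must confirm that every recursive appeal—in particular the one unfolding $(x_{\ww a^-}\ast x_{\ww b^-})\ast x_j$—genuinely lands at total depth strictly below $N$, so that the induction closes.
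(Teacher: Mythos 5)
Your proposal is correct, and it is essentially the paper's approach: the paper does not reprove this theorem (it is quoted from \cite{Shi2018}), but your argument --- the recursion $S_{<d}(a_1,\ww{a}^-)=\sum_{e<d}S_e(a_1)S_{<e}(\ww{a}^-)$, the trichotomy $e>e'$, $e<e'$, $e=e'$, the partial-fraction/character-sum identity of Chen on the diagonal, and induction on total depth --- is exactly the method the authors use for the higher-rank analogue in Propositions \ref{prop-G_d(r)G_d(s)} and \ref{prop-bfG-ring-hom}, which they explicitly model on Shi's proof. Your flagged concerns (the restriction $i\ge 1$ in the partial fraction, the sign conversion $(-1)^{j-a-1}=(-1)^{a-1}$ when $(q-1)\mid j$, and the depth count for $(x_{\ww{a}^-}\ast x_{\ww{b}^-})\ast x_j$) all check out.
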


\subsection{Multiple Eisenstein Series}

We now define the multiple Eisenstein series of arbitrary rank and state the main result of this paper.
For a positive integer $r$, let $\Omega^r$ be the Drinfeld symmetric space of rank $r$ consisting of elements in $\PP^{r-1}(\CC_\infty)$ with $K_{\infty}$-linearly independent entries.
It is known that $\Omega^r$ admits a rigid analytic structure \cite[Proposition 6.1]{Drinfeld1974}.
Each element $\bfz \in \Omega^r$ can be identified as $\bfz = (z_1,\ldots,z_{r-1},1) \in \CC_\infty^r$.
Hence, we may view $\Omega^r$ as the following subset of $\C_\infty^r$:
\[
\Omega^r = \{(z_1,\ldots,z_r)\in\C_\infty^r:z_1,\ldots,z_r \text{ are }K_\infty\text{-linearly independent and }z_r=1\}.
\]
For $\bff = (f_1,\ldots,f_r)\in A^r$ and $\bfz = (z_1,\ldots,z_r) \in \Omega^r$, we put
\[
\ang{\bff,\bfz} := \sum_{i=1}^r f_iz_i \in \CC_\infty.
\]

The following definition generalizes the rank two case due to Chen \cite{Chen2017}.

\begin{df}
    Let $r\geq 1$, $\bfz = (z_1,\ldots,z_{r-1},z_r=1) \in \Omega^r$ and  $\bff,\bfg\in A^r$.
    Write $\bff=(f_1,\ldots,f_r)$ and $\bfg=(g_1,\ldots,g_r)$.
    We define a partial order $\succ$ on $Az_1+ \cdots +Az_{r-1}+A$ as follows:
    \begin{enumerate}
        \item $\ang{\bff,\bfz}\succ 0$ if $f_1=\cdots=f_{i-1}=0$ and $f_i\in A_+$ for some $1\leq i\leq r$.
        \item For $\ang{\bff,\bfz}, \ang{\bfg,\bfz}\succ 0$, we write $\ang{\bff,\bfz} \succ \ang{\bfg,\bfz}$ if one of the following holds:
        \begin{enumerate}
            \item There exists $1\leq i\leq r$ such that
            $f_1=\cdots=f_{i-1}=g_1=\cdots=g_{i-1}=0$ and
            $f_i,g_i\in A_+$ with $\deg f_i>\deg g_i$.
            \item There exists $1\leq i\leq r$ such that
            $f_1=\cdots=f_{i-1}=g_1=\cdots=g_{i-1}=0$ and
            $f_i\in A_+,g_i=0$.
        \end{enumerate}
    \end{enumerate}
\end{df}

We now define the multiple Eisenstein series of rank $r$, which generalize the single and double Eisenstein series on $\Omega^2$ defined by Chen \cite{Chen2017}.

\begin{df}[Multiple Eisenstein series]\label{df-MES-positive-charcteristic}
    Let $r\geq 1$. For any non-empty index $\ww{a}=(a_1,\ldots,a_m)\in\bbN^m$, we define the \textit{multiple Eisenstein series of rank $r$} on $\Omega^r$ by 
    \[
    E_r(\ww{a};\bfz)
    := \sum_{\substack{\bff_1,\ldots,\bff_m \in A^r \\ \ang{\bff_1,\bfz} \succ \cdots \succ \ang{\bff_m,\bfz} \succ 0}}
    \frac{1}{\ang{\bff_1,\bfz}^{a_1}\cdots \ang{\bff_m,\bfz}^{a_m}}.
    \]
    By convention, we put   $E_r(\emptyset;\mathbf{z}):=1$.
\end{df}

Let $\Ocal(\Omega^r)$ be the algebra of rigid analytic functions on $\Omega^r$.
We will see in Proposition \ref{prop-MES-rigid} that the multiple Eisenstein series of rank $r$ are rigid analytic functions on $\Omega^r$, i.e., $E_r(\ww{a};\bfz) \in \Ocal(\Omega^r)$.

\begin{df}  \label{df-E-hat}
    For $r\ge 1$, we define $\bfE_r: \Rcal \to \Ocal(\Omega^r)$ to be the unique $\FF_p$-linear map satisfying  
    \[
    \bfE_r(1) := 1
    \quad
    \text{and}
    \quad
    \bfE_r(x_{\ww{a}}) := E_r(\ww{a};\bfz).
    \]
\end{df}

Our main result is stated as follows.

\begin{thm}[Main Result] \label{thm-main-theorem}
    For $r\geq 1$, $\bfE_r$ is an $\FF_p$-algebra homomorphism, i.e.,
    \[
    \bfE_r(x_{\ww{a}} \ast x_{\ww{b}}) = E_r(\ww{a};\bfz) E_r(\ww{b};\bfz).
    \]
\end{thm}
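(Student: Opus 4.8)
The plan is to generalize the proof of Theorem~\ref{thm-zeta-alg-hom} from monic polynomials graded by degree to lattice points graded by a two-dimensional invariant. Since $\bfE_r$ is $\FF_p$-linear, it suffices to verify $\bfE_r(x_{\ww a}\ast x_{\ww b})=E_r(\ww a;\bfz)E_r(\ww b;\bfz)$ on pairs of words. Observe first that for $r=1$ one has $\Omega^1=\{z_1=1\}$ and $\ang{\bff,\bfz}=f_1$, so $E_1(\ww a;\bfz)=\zeta_A(\ww a)$ and the claim is exactly Theorem~\ref{thm-zeta-alg-hom}; the task is to run the same argument in higher rank. To each nonzero $u=\ang{\bff,\bfz}\succ0$ I attach the \emph{shape} $\sigma(u):=(i,\deg f_i)$, where $i$ is the least index with $f_i\neq0$, and I order shapes by declaring $(i,d)>(i',d')$ iff $i<i'$, or $i=i'$ and $d>d'$. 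A direct check against the definition of $\succ$ shows that this relation is governed entirely by shapes: $u\succ v \iff \sigma(u)>\sigma(v)$, and $u,v$ are $\succ$-incomparable precisely when $\sigma(u)=\sigma(v)$ and $u\neq v$. Thus a chain $\ang{\bff_1,\bfz}\succ\cdots\succ\ang{\bff_m,\bfz}$ is the same as a tuple with strictly decreasing shapes, in exact analogy with strictly decreasing degrees in the definition of $\zeta_A(\ww a)$.

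The decisive structural fact---the analog of ``two monic polynomials of equal degree differ by one of strictly smaller degree''---is that $\sigma(u)=\sigma(v)$ with $u\neq v$ forces $\sigma(u-v)<\sigma(u)$, since subtracting a point of smaller shape changes neither the leading index nor the leading degree; moreover, for fixed $w$ of shape $<\sigma$, the map $u\mapsto u-w$ is a bijection of the set of shape-$\sigma$ points. For a shape $\sigma$ I introduce the truncation $\bfE_r^{\prec\sigma}(x_{\ww a})$, defined as the sub-sum of $E_r(\ww a;\bfz)$ over those chains whose top $\ang{\bff_1,\bfz}$ has shape $\prec\sigma$; by Proposition~\ref{prop-MES-rigid} every such sub-sum converges in $\CC_\infty$, so rearrangements are unconditionally valid, and $\bfE_r^{\prec\sigma}\to\bfE_r$ as $\sigma\to(1,+\infty)$. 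I would prove the finite-level identity $\bfE_r^{\prec\sigma}(x_{\ww a})\,\bfE_r^{\prec\sigma}(x_{\ww b})=\bfE_r^{\prec\sigma}(x_{\ww a}\ast x_{\ww b})$ for every $\sigma$ by the same double induction---on $\sigma$ in the well-order of shapes, and on the total depth $\dep(\ww a)+\dep(\ww b)$---that underlies Theorem~\ref{thm-zeta-alg-hom}, and then pass to the limit.

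The inductive step expands the product and splits according to the relation between the two top points $u_1=\ang{\bff_1,\bfz}$ and $v_1=\ang{\bfg_1,\bfz}$. When $u_1\succ v_1$ or $v_1\succ u_1$, the dominant point is the global maximum; factoring it out and invoking the induction hypothesis on the two remaining chains yields the terms $x_{a_1}(x_{\ww a^-}\ast x_{\ww b})$ and $x_{b_1}(x_{\ww a}\ast x_{\ww b^-})$ of Definition~\ref{df-zeta-values-as-words}. The remaining regime $\sigma(u_1)=\sigma(v_1)$ is the diagonal: the subcase $u_1=v_1$ merges the two leading factors into $\ang{\bff_1,\bfz}^{-(a_1+b_1)}$ and gives $x_{a_1+b_1}(x_{\ww a^-}\ast x_{\ww b^-})$, while for $u_1\neq v_1$ I set $w:=u_1-v_1$, note $\sigma(w)<\sigma(u_1)$, and apply the characteristic-$p$ partial-fraction identity for $\ang{\bff_1,\bfz}^{-a_1}\ang{\bfg_1,\bfz}^{-b_1}$ exactly as in Chen's formula. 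Since $w$ runs over the points of every smaller shape, it re-enters the merged tail at an arbitrary lower position; summing over these positions produces the nested shuffle $(x_{\ww a^-}\ast x_{\ww b^-})\ast x_j$, the partial-fraction binomials give the coefficients $\Delta^{i,j}_{a_1,b_1}$, and the $\FF_q^\times$-scaling invariance of the leftover sum (which vanishes unless $q-1\mid j$ because $\sum_{c\in\FF_q^\times}c^{-j}=0$ otherwise) enforces the constraint $q-1\mid j$. The main obstacle is precisely this diagonal computation: confirming that the partial-fraction manipulation valid for two monic polynomials of equal degree transfers verbatim to two lattice points of equal shape in arbitrary rank, and that re-inserting the leftover $w$ reproduces the nested shuffle with the correct coefficients and divisibility condition. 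The argument should go through because the only features of the degree grading used by Chen and Shi---a total order on the grading set, stability of each graded piece under adding a lower element, and a universal partial-fraction identity---all persist for shapes, with the coordinate $\bfz$ entering merely as an inert parameter; matching the four contributions against Definition~\ref{df-zeta-values-as-words} closes the induction, and letting $\sigma\to(1,+\infty)$ proves the theorem.
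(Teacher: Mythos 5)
Your proposal is correct in outline, but it takes a genuinely different route from the paper. The paper proves the theorem by induction on the rank $r$: it expands $E_r(\ww{a};\bfz)$ in terms of multiple Goss sums and $E_{r-1}(\cdot;\bfz')$ (Proposition \ref{prop-expansion-MES}), introduces the auxiliary $q$-shuffle algebra $\Ecal$ on two families of generators $x_k,y_k$ together with its associator ideal $\Afk$, factors $\bfE_r=\bfG_r\circ\bfe$ through $\Ecal/\Afk$, and verifies separately that $\bfG_r$ (Proposition \ref{prop-bfG-ring-hom}, using the product formula for Goss power sums of Proposition \ref{prop-G_d(r)G_d(s)} and the rank $r-1$ hypothesis) and $\bfe$ (Proposition \ref{prop-bfe-alg-hom}) are $\Fp$-algebra homomorphisms. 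You instead generalize Shi's rank-one argument directly, replacing the degree grading on $A_+$ by the lexicographic ``shape'' grading $\sigma(u)=(i,\deg f_i)$ on $\{u\in\Lambda_\bfz : u\succ 0\}$; your two structural observations are correct and do carry the argument: $\succ$ is exactly the strict order induced by shapes, and the shape-$\sigma$ points form a single coset of the $\Fq$-subspace of points of shape $<\sigma$ (so that $u\neq v$ of equal shape gives $u-v=\epsilon w$ with $\epsilon\in\Fq^\times$ and $w\succ 0$ of strictly smaller shape), after which the partial-fraction identity, the $\Fq^\times$-averaging forcing $q-1\mid j$, and the reassembly of the nested shuffle $(x_{\ww{a}^-}\ast x_{\ww{b}^-})\ast x_j$ proceed verbatim, with convergence of all sub-sums guaranteed by the estimate in Proposition \ref{prop-MES-rigid}. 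Note that your induction on $\sigma$ silently contains the paper's rank induction (the shapes of leading index $\geq 2$ exhaust exactly the rank $r-1$ series), and in fact a single induction on total depth, with the truncation level $\sigma$ universally quantified, already suffices. What your approach buys is economy and transparency: no auxiliary algebra, no associator ideal (and hence no dependence on working modulo $\Afk$), and a clear identification of the only properties of the grading that matter. What the paper's approach buys is the explicit Goss-sum expansion exhibiting $\zeta_A(\ww{a})$ as the constant term of $E_r(\ww{a};\bfz)$, and the algebra $\Ecal$ itself, which the authors single out for further study. In writing up your version, the two points to make fully explicit are the coset/$\Fq^\times$-parametrization of the nonzero elements of shape $<\sigma$ and the fact that the depth induction hypothesis is invoked uniformly at all truncation levels $\tau<\sigma$; neither is a gap, only a matter of care.
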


We notice that by our identification, $\Omega^1=\{1\}$ and
\[
E_1(\ww{a};\mathbf{z})=\sum_{\substack{f_1,\ldots,f_m\in A \\ \left\langle f_1,\mathbf{z}\right\rangle \succ \cdots \succ \left\langle f_m,\mathbf{z}\right\rangle\succ 0}}\frac{1}{\left\langle f_1,\mathbf{z}\right\rangle^{a_1}\cdots \left\langle f_m,\mathbf{z}\right\rangle^{a_m}}= \sum_{\substack{f_1,\ldots,f_m\in A_+ \\ \deg f_1 > \cdots > \deg f_m \geq 0}}
\frac{1}{f_1^{a_1} \cdots f_m^{a_m}} =\zeta_A(\ww{a}).
\]
That is, multiple Eisenstein series degenerate to multiple zeta values and Theorem \ref{thm-zeta-alg-hom} serves as the rank one case of our main result.

In Proposition \ref{prop-expansion-MES}, we prove that for any non-empty index $\ww{a}\in \NN^{m}$, the multiple zeta value $\zeta_A(\ww{a})$ appears as the “constant term" of the expansion of $E_r(\ww{a};\mathbf{z})$ in terms of “multiple Goss sums” (see Definition \ref{df-Goss-sum}).
Hence, Theorem \ref{thm-main-theorem} illustrates the phenomenon that Thakur's $q$-shuffle relations for the multiple zeta values can be “lifted” to the corresponding multiple Eisenstein series, thereby generalizing Chen's result \cite{Chen2017} to arbitrary depth and arbitrary rank.

In the classical situation, the notion of multiple Eisenstein series on the complex upper half-plane was initiated by Gangl, Kaneko, and Zagier \cite{gkz2006double} for the depth two case, and generalized by Bachmann \cite{bachmann2012multiple} to arbitrary depth.
It is known that, the product of two real multiple zeta values can be expressed as $\QQ$-linear combinations of real multiple zeta values of the same weight via their integral (resp. series) representations, called the shuffle (resp. stuffle) relations.
These relations are further “lifted” to multiple Eisenstein series (in one variable) in the works of \cite{bt2018double} and \cite{bachmann2019algebra}, corresponding to the rank two case of Theorem \ref{thm-main-theorem}. For recent developments and references, we refer the readers to Bachmann’s papers.

In contrast to the classical situation, Thakur \cite{thakur2009finite} conjectured that the multiple zeta values in positive characteristic admit only one “shuffle relation" and gave some heuristic reasons and observations based on the “motivic interpretations" of multiple zeta values \cite{anderson2009multizeta}.
Therefore, Theorem \ref{thm-main-theorem} completely resolves the question of lifting shuffle relations for multiple zeta values to multiple Eisenstein series in the existing literature.

\subsection{Ingredients of the Proof}
  
We first establish the expansion of the multiple Eisenstein series of rank $r\ge 2$ in terms of rank $r$ multiple Goss sums and rank $r-1$ multiple Eisenstein series (see Proposition \ref{prop-expansion-MES} and Definition \ref{df-Goss-sum}).
This allows us to prove Theorem \ref{thm-main-theorem} by induction on the rank $r$, taking Theorem \ref{thm-zeta-alg-hom} as the base case.
We consider the Goss power sums (see Definition \ref{df-Goss-sum}), which are truncations of multiple Goss sums and serve as a higher rank analog of power sums.
Inspired by Chen's calculation in \cite[Theorem 4.1]{Chen2017}, we prove an explicit formula for the product of depth one Goss power sums (see Proposition \ref{prop-G_d(r)G_d(s)}) and further formulate the $q$-shuffle relations for multiple Goss sums.

Next, we define the $q$-shuffle algebra $\Ecal$ (see Definition \ref{df-Ecal}), which encodes the $q$-shuffle relations for multiple Goss sums.
This enables us to decompose the map $\bfE_r:\Rcal\to \mathcal{O}(\Omega^r)$ into
\[
\bfE_r=\bfG_r \circ \bfe : \Rcal\to \Ecal/\Afk \to \mathcal{O}(\Omega^r),
\] 
where $\Afk$ denotes the associator ideal of $\Ecal$, $\bfG_r$ is the realization map of rank $r$ multiple Goss sums (see Definition \ref{df-bfG}), and $\bfe$ is the formalization of the expansion of multiple Eisenstein series (see Definition \ref{df-bfe}). 
The main theorem (Theorem \ref{thm-main-theorem}) is now reduced to showing that the maps $\bfG_r$ and $\bfe$ are $\Fp$-algebra homomorphisms. 
To this end, we divide the proof into the following two parts:

\begin{enumerate}
    \item Inspired by the analogy between Goss power sums and Thakur's power sums, we adopt an approach similar to Shi’s proof of Theorem~\ref{thm-zeta-alg-hom}.
    Under the induction hypothesis, we show that the realization map
    \[
    \bfG_r : \Ecal \to \Ocal(\Omega^r)
    \]
    is an $\Fp$-algebra homomorphism (see Proposition~\ref{prop-bfG-ring-hom}), which is equivalent to saying that the rank $r$ multiple Goss sums satisfy the $q$-shuffle relations formulated earlier.
    \item We prove that the map $\bfe$ satisfies explicit “$q$-shuffle relations” (see Proposition \ref{prop-ye(x)*ye(x)}) using the associativity of $\Ecal/\Afk$.
    (We conjecture that the algebra $\Ecal$ is associative in Remark \ref{rmk-ass}.)
    This, in turn, implies that $\bfe$ is an $\FF_p$-algebra homomorphism (see Proposition \ref{prop-bfe-alg-hom}).
\end{enumerate}

\section{Multiple Eisenstein Series and Multiple Goss Sums}

\subsection{Basic Properties of Multiple Eisenstein Series}

For a $K_\infty$-rational hyperplane $H$ in $\PP^{r-1}(\CC_\infty)$, we choose any unimodular linear form
\[
\l_H(X_1,\ldots,X_r) := c_1 X_1 + \cdots + c_r X_r \in K_\infty[X_1,\ldots,X_r],
\quad
\max\{|c_i| : 1 \leq i \leq r\} = 1,
\]
to be the defining equation of $H$.
Note that for $\bfz = (z_1, \ldots, z_{r-1}, z_r = 1)\in\Omega^r$, $|\l_H(\bfz)|$ is well-defined and non-zero.
For $\bfy = (y_1,\ldots,y_r) \in \CC_\infty^r$, put $|\bfy| := \max\{|y_i| : 1 \leq i \leq r\}$.
Set
\[
h(\bfz) := \frac{1}{|\bfz|} \inf\{|\l_H(\bfz)| : H \text{ is a } K_\infty\text{-rational hyperplane}\}.
\]
Recall that
\[
\Omega^r = \bigcup_{n \in \NN} \Omega^r_n
\quad
\text{where}
\quad
\Omega^r_n := \{\bfz \in \Omega^r : h(\bfz) \geq q^{-n}\}
\]
is an admissible cover of $\Omega^r$ (see \cite[Proposition 3.4]{bbp2024drinfeld}), and a function $f : \Omega^r \to \CC_\infty$ is called rigid analytic if its restriction to each $\Omega^r_n$ is the uniform limit of a sequence of rational functions without poles on $\Omega^r_n$ (see \cite[Definition 2.2.1]{fresnelvanderput2004rigid}).

\begin{prop}\label{prop-MES-rigid}
    Let $r\ge 1$.
    For any index $\ww{a}$, the multiple Eisenstein series $E_r(\ww{a};\bfz): \Omega^r \to \CC_\infty$ is a rigid analytic function on $\Omega^r$.
\end{prop}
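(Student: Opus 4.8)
The plan is to verify rigid analyticity directly from the definition recalled before the statement: it suffices to show that for every $n\in\NN$ the restriction of $E_r(\ww{a};\bfz)$ to $\Omega^r_n$ is a uniform limit of rational functions having no poles on $\Omega^r_n$. I will treat the case $r=1$ separately, as it is immediate: $\Omega^1$ is a single point and $E_1(\ww{a};\bfz)=\zeta_A(\ww{a})$, whose convergence in $K_\infty$ is the convergence of the defining series of Thakur's multiple zeta value. So assume $r\ge 2$. The entire argument rests on one uniform lower bound: for every $\bff\in A^r\setminus\{0\}$ and every $\bfz\in\Omega^r$,
\[
|\ang{\bff,\bfz}|\;\ge\; h(\bfz)\,|\bfz|\,|\bff|.
\]

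To establish this, fix $\bff=(f_1,\ldots,f_r)\in A^r\setminus\{0\}$ and choose $c\in K^\times$ with $|c|=|\bff|$ (for instance $c=\T^e$ with $e=\max_i\deg f_i$). Then $c^{-1}\ang{\bff,X}=\sum_{i=1}^r(f_i/c)\,X_i$ has coefficients $f_i/c\in K\subset K_\infty$ satisfying $\max_i|f_i/c|=1$, so it is a unimodular defining form for the $K_\infty$-rational hyperplane $H_\bff:=\{\ang{\bff,X}=0\}$. By the definition of $h(\bfz)$ as an infimum over all such hyperplanes, $|c|^{-1}|\ang{\bff,\bfz}|=|\l_{H_\bff}(\bfz)|\ge h(\bfz)\,|\bfz|$, which is the claimed inequality. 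Restricting to $\Omega^r_n$ and using $h(\bfz)\ge q^{-n}$ together with $|\bfz|\ge|z_r|=1$, I obtain the uniform estimate $|\ang{\bff,\bfz}|\ge q^{-n}|\bff|$, valid for all $\bff\in A^r\setminus\{0\}$ and all $\bfz\in\Omega^r_n$.

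Armed with this, convergence becomes almost automatic in the non-archimedean setting. For any ordered tuple $(\bff_1,\ldots,\bff_m)$ appearing in the sum, the condition $\ang{\bff_j,\bfz}\succ 0$ forces $\bff_j\neq 0$, hence $|\bff_j|\ge 1$, and the corresponding term is bounded on $\Omega^r_n$ by
\[
\left|\frac{1}{\ang{\bff_1,\bfz}^{a_1}\cdots\ang{\bff_m,\bfz}^{a_m}}\right|\;\le\; q^{\,n\,\wt(\ww{a})}\prod_{j=1}^m|\bff_j|^{-a_j},
\]
independently of $\bfz$. Consequently, for any $\epsilon>0$ a term can have absolute value $\ge\epsilon$ only if $\prod_j|\bff_j|^{a_j}\le q^{\,n\,\wt(\ww{a})}/\epsilon$; since every factor is $\ge 1$, this bounds each $|\bff_j|$, hence each $\deg f_{j,i}$, leaving only finitely many such tuples. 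Thus the family of terms is summable uniformly on $\Omega^r_n$. Ordering the tuples by the maximum degree of their entries yields a sequence of finite partial sums converging uniformly on $\Omega^r_n$ to $E_r(\ww{a};\bfz)$; each partial sum is a finite $\CC_\infty$-linear combination of products $\prod_j\ang{\bff_j,\bfz}^{-a_j}$, which are rational functions in $(z_1,\ldots,z_{r-1})$ whose only poles lie on the loci $\ang{\bff_j,\bfz}=0$. As the $z_i$ are $K_\infty$-linearly independent, these loci are disjoint from $\Omega^r$, so each partial sum is pole-free on $\Omega^r_n$. By the definition recalled above, $E_r(\ww{a};\bfz)|_{\Omega^r_n}$ is therefore rigid analytic, and letting $n$ vary gives $E_r(\ww{a};\bfz)\in\Ocal(\Omega^r)$.

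The substantive step is the uniform lower bound relating $|\ang{\bff,\bfz}|$ to $h(\bfz)$; once this geometric input is in place, the distinctive feature of the non-archimedean situation—that summability requires only the terms to become small, with none of the delicate convergence-exponent analysis needed for classical Eisenstein series—makes the remaining counting and approximation entirely routine. In particular, the ordering condition $\succ$ plays no role in the convergence argument beyond thinning out the index set, so it can be ignored for this purpose.
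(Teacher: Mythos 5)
Your proof is correct and follows essentially the same route as the paper: both hinge on the lower bound $|\ang{\bff,\bfz}|\ge h(\bfz)\,|\bfz|\,|\bff|\ge q^{-n}|\bff|$ on $\Omega^r_n$, obtained by viewing the normalized form $\ang{\bff,X}/|\bff|$ as a unimodular defining equation of a $K_\infty$-rational hyperplane, and then deduce uniform convergence of the partial sums. You merely spell out two details the paper leaves implicit (why the normalized form is unimodular, and the finiteness count behind uniform summability), which is fine.
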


\begin{proof}
    When $r=1$ or $\ww{a}$ is empty, the result is clear.
    Therefore, we may assume $r\ge 2$ and $\ww{a}=(a_1,\ldots,a_m)\in \bbN^m$ is non-empty.
    For any non-zero $\bff = (f_1,\ldots,f_r) \in A^r$ and any $\bfz=(z_1,\ldots,z_{r-1}, z_r=1) \in \Omega_n^r$, we have
    \[
    h(\bfz) \leq \frac{1}{|\bfz|} \left| \frac{f_1}{|\bff|} z_1 + \cdots + \frac{f_r}{|\bff|} z_r\right|.
    \]
    Since $\bfz \in \Omega_n^r$ and $|\bfz| \geq 1$ by our identification, it follows that
    \[
    |\ang{\bff,\bfz}|
    = |f_1z_1 + \cdots + f_rz_r| 
    \geq h(\bfz) |\bfz| |\bff| 
    \geq q^{-n} |\bff|.
    \]
    Thus, for $\bff_1,\ldots,\bff_m \in A^r$ with $\ang{\bff_1,\bfz} \succ \cdots \succ \ang{\bff_m,\bfz} \succ 0$, we have
    \[
    |\ang{\bff_1,\bfz}^{a_1}\cdots \ang{\bff_m,\bfz}^{a_m}|
    \geq q^{-n \wt(\ww{a})} |\bff_1|^{a_1} \cdots |\bff_m|^{a_m}.
    \]
    Hence,
    \[
    E_r(\ww{a};\bfz)
    = \sum_{\substack{\bff_1,\ldots,\bff_m \in A^r \\ \ang{\bff_1,\bfz} \succ \cdots \succ \ang{\bff_m,\bfz} \succ 0}}
    \frac{1}{\ang{\bff_1,\bfz}^{a_1}\cdots \ang{\bff_m,\bfz}^{a_m}}
    \]
    is the uniform limit of rational functions with poles outside $\Omega_n^r$.
\end{proof}

In what follows, we define three types of notation: Given an index $\ww{a}$, we write $\ww{a}^{(i)}$ for the index obtained by deleting its first $i$ coordinates, and $\ww{a}_{(i)}$ for the one obtained by keeping its first $i$ coordinates. Similarly, given $\bfz \in \Omega^r$, we let $\bfz' \in \Omega^{r-1}$ denote the element obtained by deleting its first coordinate.

\begin{df}
    \begin{enumerate}
        \item For any non-empty index $\ww{a}=(a_1,\ldots,a_m)\in\bbN^m$ and $0\leq i\leq m-1$, we define
        \[
        \ww{a}^{(i)} := (a_{i+1},\ldots,a_m)
        \quad
        \text{and}
        \quad
        \ww{a}^{(m)} := \emptyset.
        \]
        On the other hand, for each $1\le i\le m$, we also define
        \[
        \ww{a}_{(i)} := (a_1,\ldots,a_i)
        \quad
        \text{and}
        \quad
        \ww{a}_{(0)} := \emptyset.
        \]
        By convention, we put
        \[
        \emptyset^{(i)} = \emptyset_{(i)} := \emptyset
        \]
        for all $i\geq 0$.

        \item For $r\ge 2$ and $\bfz = (z_1,\ldots,z_{r-1},z_r=1)\in \Omega^r$, let $\bfz' : = (z_2,\ldots,z_r)\in \Omega^{r-1}$.
    \end{enumerate}
\end{df}

For $r\ge 1 $ and $\bfz = (z_1,\ldots,z_{r-1},z_r=1) \in \Omega^r$ with the associated $A$-lattice $\Lambda_\bfz := A z_1 + \cdots + A z_r$ in $\CC_\infty$, we consider the function
\[
t_{\Lambda_\bfz}(x) := \sum_{\lambda \in \Lambda_\bfz} \frac{1}{x+\lambda} = \exp_{\Lambda_\bfz}(x)^{-1},
\]
where
\[
\exp_{\Lambda_\bfz}(x) = x\prod_{0 \neq \lambda \in \Lambda_\bfz} \left(1-\frac{x}{\lambda}\right)
\]
is the exponential function associated to $\Lambda_\bfz$ (see \cite[Chapter 4]{goss1996basic}).
Recall that in \cite{Gos1980} (see \cite{gekeler1988coefficients} also), Goss introduced the so-called \textit{Goss polynomials} $\{G_k^{\Lambda_{\mathbf{z}}}(x) : k\ge 1 \} \subseteq \CC_\infty[x]$  satisfying
\begin{equation}  \label{eq-Goss-polynomials}
    G^{\Lambda_\bfz}_k(t_{\Lambda_\bfz}(x)) = \sum_{\lambda \in \Lambda_\bfz} \frac{1}{(x+\lambda)^k}.
\end{equation}

With the necessary notions introduced, we can now state the following expression for the multiple Eisenstein series.

\begin{prop} \label{prop-expansion-MES}
    Let $r\ge 2$.
    For any non-empty index $\ww{a} = (a_1,\ldots,a_m) \in \bbN^m$ and $\bfz \in \Omega^r$, we have 
    \begin{multline*}
        E_r(\ww{a};\bfz) = E_{r-1}(\ww{a};\bfz') \\
        + \sum_{i=1}^{m} E_{r-1}(\ww{a}^{(i)};\bfz') \sum_{\substack{f_1,\ldots,f_i\in A_+ \\ \deg f_1>\cdots>\deg f_i\ge 0}} G_{a_1}^{\Lambda_{\bfz'}}(t_{\Lambda_{\bfz'}}(f_1z_1)) \cdots G_{a_i}^{\Lambda_{\bfz'}}(t_{\Lambda_{\bfz'}}(f_iz_1)).
    \end{multline*}
\end{prop}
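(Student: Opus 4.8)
The plan is to partition the summation domain of $E_r(\ww{a};\bfz)$ according to the integer $i \in \{0,1,\ldots,m\}$ counting how many of the lattice points $\ang{\bff_1,\bfz}, \ldots, \ang{\bff_m,\bfz}$ have a nonzero $z_1$-coordinate, and then to factor each resulting block into a product of Goss-polynomial sums and a lower-rank multiple Eisenstein series. Throughout I would write $\bff_j = (f_{j,1}, \bff_j')$ with $\bff_j' = (f_{j,2},\ldots,f_{j,r}) \in A^{r-1}$, so that $\ang{\bff_j,\bfz} = f_{j,1} z_1 + \ang{\bff_j',\bfz'}$.

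First I would unwind the partial order $\succ$ to pin down the combinatorial shape of an admissible chain $\ang{\bff_1,\bfz} \succ \cdots \succ \ang{\bff_m,\bfz} \succ 0$. Since $\succ$ compares two points first by the position of their first nonzero coordinate (an earlier position being larger) and then by degree at that position, the points with $f_{j,1}\ne 0$ necessarily form an initial segment $\bff_1, \ldots, \bff_i$; moreover along this segment every comparison is decided at the first coordinate, forcing $f_{1,1}, \ldots, f_{i,1} \in A_+$ with $\deg f_{1,1} > \cdots > \deg f_{i,1} \ge 0$ while leaving the tails $\bff_1', \ldots, \bff_i'$ completely unconstrained in $A^{r-1}$. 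The remaining points $\bff_{i+1}, \ldots, \bff_m$ all have $f_{j,1}=0$, so $\ang{\bff_j,\bfz} = \ang{\bff_j',\bfz'}$, and the restriction of $\succ$ to them is exactly the order $\succ$ on $\Omega^{r-1}$; thus they range over chains $\ang{\bff_{i+1}',\bfz'} \succ \cdots \succ \ang{\bff_m',\bfz'} \succ 0$. The transition $\ang{\bff_i,\bfz} \succ \ang{\bff_{i+1},\bfz}$ is automatic (clause (b), since the first nonzero coordinate of $\bff_i$ precedes that of $\bff_{i+1}$) and imposes no coupling between the two regimes.

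With this description the $i$-th block factors completely. The inner sum over each free tail $\bff_j' \in A^{r-1}$ (for $1\le j\le i$) is
\[
\sum_{\bff_j' \in A^{r-1}} \frac{1}{(f_{j,1}z_1 + \ang{\bff_j',\bfz'})^{a_j}} = \sum_{\lambda \in \Lambda_{\bfz'}} \frac{1}{(f_{j,1}z_1 + \lambda)^{a_j}} = G_{a_j}^{\Lambda_{\bfz'}}\!\big(t_{\Lambda_{\bfz'}}(f_{j,1}z_1)\big),
\]
where the first equality uses that $\bff_j' \mapsto \ang{\bff_j',\bfz'}$ is a bijection $A^{r-1} \isoto \Lambda_{\bfz'}$ (because $z_2,\ldots,z_r$ are $K_\infty$-linearly independent) and the second is the defining identity \eqref{eq-Goss-polynomials} for Goss polynomials, valid since $f_{j,1}z_1 \notin \Lambda_{\bfz'}$. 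The tail sum is by definition $E_{r-1}(\ww{a}^{(i)};\bfz')$. Collecting blocks, the $i=0$ term contributes $E_{r-1}(\ww{a};\bfz')$ and the $i\ge 1$ terms reproduce exactly the claimed formula, with the degenerate case $i=m$ giving $E_{r-1}(\ww{a}^{(m)};\bfz')=E_{r-1}(\emptyset;\bfz')=1$.

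The main obstacle is twofold, and both parts are bookkeeping rather than deep. First, the order analysis of the second step must be executed carefully: one has to verify against clauses (a) and (b) of the definition of $\succ$ that the nonzero-$z_1$ points genuinely form an initial segment with \emph{strictly} decreasing first-coordinate degrees, and that the order induced on the tail is literally the $\Omega^{r-1}$ order. Second, the partition-and-factor step rearranges an infinite sum, so I would invoke that the defining family of $E_r(\ww{a};\bfz)$ is summable at each fixed $\bfz$ — precisely the decay established in the proof of Proposition~\ref{prop-MES-rigid} — so that in the non-archimedean setting the regrouping into blocks and the Fubini-type factorization inside each block are both legitimate.
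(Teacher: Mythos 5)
Your proposal is correct and follows essentially the same route as the paper: the paper likewise partitions the chains $\ang{\bff_1,\bfz}\succ\cdots\succ\ang{\bff_m,\bfz}\succ 0$ into disjoint sets $\mathscr{A}_i$ indexed by the number $i$ of initial entries with monic first coordinate, and factors each block into a product of Goss-polynomial sums over the free tails and the rank $r-1$ multiple Eisenstein series $E_{r-1}(\ww{a}^{(i)};\bfz')$. Your additional remarks on the initial-segment structure forced by $\succ$ and on the summability justifying the regrouping are correct refinements of what the paper leaves implicit.
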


\begin{proof}
    For $\bfg=(g_1,\ldots,g_r)\in A^r$, we put $\bfg' := (g_2,\ldots,g_r)$.
    For each $0\le i\le m$, let $\mathscr{A}_i$ be the subset of $(A^r)^m$ consisting of elements $(\bff_1,\ldots,\bff_m)\in (A^r)^m$ with $\bff_j=(f_{j,1},\ldots,f_{j,r})$ satisfying the following conditions: 
    \begin{enumerate} 
        \item $f_{1,1},\ldots,f_{i,1}\in A_+$,
        \item $\deg f_{1,1} > \cdots > \deg f_{i,1}\ge 0$,
        \item $f_{i+1,1}=\cdots=f_{m,1}=0$,
        \item $\ang{\bff_{i+1}',\bfz'} \succ \cdots \succ \ang{\bff_m',\bfz'}$.
    \end{enumerate}
    Summing over $\mathscr{A}_i$, one sees that
    \begin{align*}
        & \sum_{(\bff_1,\ldots,\bff_m)\in \mathscr{A}_i}\frac{1}{\ang{ \bff_1,\bfz}^{a_1}\cdots \ang{ \bff_m,\bfz}^{a_m}}
        \\
        ={} &\begin{multlined}[t]
            \left( \sum_{\substack{\bff_1,\ldots,\bff_i\in A_+\times A^{r-1} \\ \deg f_{1,1}>\cdots>\deg f_{i,1}\ge 0}}\frac{1}{(f_{1,1}z_1+\ang{ \bff_1',\bfz'})^{a_1}\cdots (f_{i,1}z_1+\ang{ \bff_i',\bfz'})^{a_i}} \right) \\
            \left( \sum_{\substack{\bff_{i+1},\ldots,\bff_m\in A^{r-1} \\ \ang{ \bff_{i+1},\bfz'} \succ \cdots \succ \ang{ \bff_m,\bfz'} \succ 0}}\frac{1}{\ang{ \bff_{i+1},\bfz'}^{a_{i+1}}\cdots \ang{\bff_m,\bfz'}^{a_m}} \right)
        \end{multlined} \\
        ={} &\left( \sum_{\substack{f_{1,1},\ldots,f_{i,1} \in A_+ \\ \deg f_{1,1}>\cdots>\deg f_{i,1}\ge 0}}G_{a_1}^{\Lambda_{\mathbf{z}'}}(t_{\Lambda_{\mathbf{z}'}}(f_{1,1}z_1)) \cdots G_{a_i}^{\Lambda_{\mathbf{z}'}}(t_{\Lambda_{\mathbf{z}'}}(f_{i,1}z_1))\right) \cdot E_{r-1}(\ww{a}^{(i)};\bfz')
    \end{align*}
    by \eqref{eq-Goss-polynomials} and Definition \ref{df-MES-positive-charcteristic}.
    Since the set 
    \[
    \{(\bff_1,\ldots,\bff_m)\in (A^r)^m:\left\langle \bff_1,\bfz\right\rangle \succ \cdots \succ \left\langle \bff_m,\bfz\right\rangle \succ 0\}
    \]
    can be decomposed as a disjoint union of $\mathscr{A}_i$ for $0\leq i\leq m$, the result follows.
\end{proof}

In particular, for $\bfz = (z,1)\in \Omega^2$, the expansion of $E_2(\ww{a}; (z,1))$ is
\[
E_2({\ww{a}};(z,1)) = \zeta_A(\ww{a}) + \sum_{i=1}^{m} \zeta_A(\ww{a}^{(i)})\sum_{\substack{f_1,\ldots,f_i\in A_+ \\ \deg f_1>\cdots>\deg f_i\ge 0}}G_{a_1}^{\Lambda_{1}}(t_{\Lambda_{1}}(f_1z))\cdots G_{a_i}^{\Lambda_{1}}(t_{\Lambda_{1}}(f_iz)).
\]
Inductively, we see that the multiple zeta value in question appears as the constant term (in terms of Goss polynomials) of the corresponding multiple Eisenstein series of arbitrary rank.

\subsection{Goss Power Sums and Multiple Goss Sums} 

\begin{df}  \label{df-Goss-sum}
    For any non-empty index $\ww{a} = (a_1, \ldots, a_m) \in \NN^m$ and $d \in \ZZ_{\ge 0}$, we define the \textit{Goss power sum} by
    \begin{equation}   \label{eq-Goss-power-sum}
        G_r^{=d} (\ww{a} ; \bfz) 
        := \sum_{\substack{f_1, \ldots ,f_m \in A_+ \\ d=\deg f_1>\cdots>\deg f_m \ge 0}} 
        G^{\Lamz}_{a_1} (t_{\Lamz}(f_1z_1)) \cdots G^{\Lamz}_{a_m} (t_{\Lamz}(f_mz_1))
    \end{equation}
    where $\bfz=(z_1,\ldots,z_{r-1},z_r=1)\in \Omega^r$.
    Moreover, we define for $d\in \NN$,
    \[
    G_r^{<d}(\ww{a} ; \bfz) := \sum_{i=0}^{d-1} G_r^{=i}(\ww{a} ; \bfz)
    \quad \text{and} \quad 
    G_r(\ww{a} ; \bfz) := \lim_{d\to \infty} G_{r}^{<d} (\ww{a} ; \bfz).
    \]
    The latter is called the \textit{multiple Goss sum}.
    We conventionally put $G_r^{=d}(\emptyset;\bfz)=\delta_{d,0}$ and $G_r^{<d}(\emptyset;\bfz)=1$, where $\delta_{i,j}$ denotes the Kronecker delta.
\end{df}

In view of \eqref{eq-Goss-polynomials},
we have
\begin{equation}   \label{eq-goss-power-sum-analog}
    G_r^{=d} (\ww{a} ; \bfz)
    = \sum_{\substack{f_1, \ldots ,f_m \in A_+ \\ d=\deg f_1>\cdots>\deg f_m \ge 0 \\ \bfg_1,\ldots,\bfg_m \in A^{r-1}}} 
    \frac{1}{(f_1z_1+\left\langle \bm{g}_1,\mathbf{z}'\right\rangle)^{a_1} \cdots (f_mz_1+\left\langle \bm{g}_m,\mathbf{z}'\right\rangle)^{a_m}}.
\end{equation}
Thus, the Goss power sum $G_r^{=d} (\ww{a} ; \bfz)$ serves as a higher rank analog of Thakur's power sum \eqref{eq-power-sum}.
Notice that as partial sums of the multiple Eisenstein series, $G_r^{=d}(\ww{a}; \bfz)$ and $G_r(\ww{a}; \bfz)$ are rigid analytic functions on $\Omega^r$ (recall Proposition \ref{prop-expansion-MES}).

Note that under this definition, Proposition \ref{prop-expansion-MES} can be restated as follows:
For any non-empty index $\ww{a} = (a_1,\ldots, a_m) \in \bbN^m$ and $\bfz \in \Omega^r$, we have
\begin{equation}    \label{eq-restate-t-expansion}
    E_r(\ww{a} ; \bfz) = 
    E_{r-1}(\ww{a}; \bfz') + 
    \sum_{i=1}^{m} 
    E_{r-1}(\ww{a}^{(i)}; \bfz') 
    G_r(\ww{a}_{(i)}; \bfz).
\end{equation}
We also note that for all $\ww{a}=(a_1,\ldots,a_m)\in\bbN^m$, we have
\begin{equation}  \label{eq-G_d=G_dG_<d}
    G_r^{=d}(\ww{a}; \bfz) = 
    G_r^{=d}(a_1; \bfz)
    G_r^{<d}(\ww{a}^{(1)}; \bfz).
\end{equation}

Recall that for $i,j,a,b \in \NN$, we set
\[
\Delta^{i,j}_{a,b} := (-1)^{a-1}\binom{j-1}{a-1}+(-1)^{b-1}\binom{j-1}{b-1} \in \Fp.
\]

\begin{prop}\label{prop-G_d(r)G_d(s)}
    Let $d\in\ZZ_{\ge 0}$. 
    For $a,b \in \NN$, we have
    \begin{multline*}
        G_r^{=d}(a; \bfz)G_r^{=d}(b; \bfz) \\
        = G_r^{=d}(a+b; \bfz)
        + \sum_{\substack{i+j=a+b \\ q-1\mid  j}}\Delta_{a,b}^{i,j} E_{r-1}(j; \bfz') G_r^{=d}(i; \bfz)
        + \sum_{\substack{i+j=a+b \\ q-1\mid  j}}\Delta^{i,j}_{a,b}G_r^{=d}((i,j); \bfz).
    \end{multline*}
    for all $\bfz \in \Omega^r$.
\end{prop}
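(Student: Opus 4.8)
The plan is to expand the product into a double sum over $A_{+,d}\times A^{r-1}$, peel off the diagonal, and resolve the off-diagonal part by a partial-fraction decomposition followed by an $\Fqst$-averaging argument that simultaneously manufactures the $E_{r-1}$ term and the depth-two term.

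\textbf{Setup and diagonal.} First I would invoke \eqref{eq-goss-power-sum-analog} to write the product as a sum of $1/(u_1^a u_2^b)$ over pairs $(f_1,\bfg_1),(f_2,\bfg_2)\in A_{+,d}\times A^{r-1}$, where $u_s := f_s z_1 + \ang{\bfg_s,\bfz'}$. Every rearrangement below is legitimate because the families in question are summable in the non-archimedean sense (each $G_r^{=d}$ converges as a partial sum of the rigid analytic $E_r$; cf.\ Proposition \ref{prop-expansion-MES}). Since $z_1,\ldots,z_r$ are $K_\infty$-linearly independent, $u_1=u_2$ forces $(f_1,\bfg_1)=(f_2,\bfg_2)$, so the diagonal contributes exactly $G_r^{=d}(a+b;\bfz)$.

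\textbf{Partial fractions and decoupling.} For $u_1\neq u_2$ I set $w:=u_1-u_2=\ang{\bff_1-\bff_2,\bfz}$ with $\bff_s=(f_s,\bfg_s)$, and use the elementary identity
\begin{multline*}
\frac{1}{u_1^a u_2^b} = \sum_{k=1}^a (-1)^b\binom{a+b-k-1}{a-k}\frac{1}{w^{a+b-k}u_1^k} \\
+ \sum_{l=1}^b (-1)^{b-l}\binom{a+b-l-1}{b-l}\frac{1}{w^{a+b-l}u_2^l}.
\end{multline*}
The decisive move is the change of variables $(h,\bfh'):=(f_1-f_2,\bfg_1-\bfg_2)$. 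As $f_1,f_2$ are monic of degree $d$, one has $h\in A_{<d}$, and for fixed $(f_1,\bfg_1)$ (resp.\ $(f_2,\bfg_2)$) this is a bijection onto $(A_{<d}\times A^{r-1})\setminus\{0\}$, with $w=hz_1+\ang{\bfh',\bfz'}$ depending only on $(h,\bfh')$. Hence each group decouples into a product: the $u_1^{-k}$-group contributes $(-1)^b\binom{a+b-k-1}{a-k}\,G_r^{=d}(k;\bfz)\,W_{a+b-k}$, where $W_m:=\sum_{(h,\bfh')\neq 0}(hz_1+\ang{\bfh',\bfz'})^{-m}$, and symmetrically for the $u_2^{-l}$-group.

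\textbf{Evaluating $W_m$ and matching $\Delta$.} I would split $W_m$ by $h=0$ versus $h\neq 0$. Each nonzero element of $\Lamz$ is uniquely $c\mu$ with $c\in\Fqst$ and $\mu\succ 0$ ($c$ being the leading coefficient of the first nonzero entry), so the $h=0$ stratum equals $\big(\sum_{c\in\Fqst}c^{-m}\big)E_{r-1}(m;\bfz')$; a parallel scaling of $(h,\bfh')$ turns the $h\neq 0$ stratum into $\big(\sum_{c\in\Fqst}c^{-m}\big)G_r^{<d}(m;\bfz)$. As $\sum_{c\in\Fqst}c^{-m}$ is $-1$ when $(q-1)\mid m$ and $0$ otherwise, we get $W_m=-\big(E_{r-1}(m;\bfz')+G_r^{<d}(m;\bfz)\big)$ for $(q-1)\mid m$ and $W_m=0$ else, which is precisely what imposes the constraint $(q-1)\mid j$. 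Reindexing by the surviving exponent $i$ with $j=a+b-i$, the two groups contribute to $G_r^{=d}(i;\bfz)\big(E_{r-1}(j;\bfz')+G_r^{<d}(j;\bfz)\big)$ with total coefficient $-(-1)^b\binom{j-1}{a-i}-(-1)^{b-i}\binom{j-1}{b-i}$ (the omitted ranges are harmless since the absent binomials vanish). From $i+j=a+b$ one has $a-i=j-b$ and $b-i=j-a$, so $\binom{j-1}{a-i}=\binom{j-1}{b-1}$ and $\binom{j-1}{b-i}=\binom{j-1}{a-1}$; and $(q-1)\mid j$ forces $(-1)^j=1$ in $\Fp$ (trivial for $p=2$, while for odd $p$ the integer $q-1$ is even, hence $j$ is even), giving $(-1)^{b-i}=(-1)^a$. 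The coefficient then collapses to $\Delta_{a,b}^{i,j}$. Recombining with the diagonal and rewriting $G_r^{=d}(i;\bfz)G_r^{<d}(j;\bfz)=G_r^{=d}((i,j);\bfz)$ via \eqref{eq-G_d=G_dG_<d} yields the claim.

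The routine partial-fraction bookkeeping is harmless; the delicate points I expect to dwell on are the rigorous justification of the infinite rearrangements via non-archimedean summability, and the evaluation of $W_m$, where the $\Fqst$-averaging has to be applied to both the $h=0$ and $h\neq 0$ strata in tandem so that the $E_{r-1}$ and $G_r^{<d}$ contributions appear together, followed by the sign-and-binomial reconciliation that makes the coefficient land exactly on $\Delta_{a,b}^{i,j}$.
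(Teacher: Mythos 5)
Your proof is correct and uses the same essential ingredients as the paper's: the partial-fraction identity \eqref{eq-partial-frac} applied to the off-diagonal terms, followed by $\Fqst$-averaging to produce the constraint $q-1 \mid j$ and the coefficient $\Delta^{i,j}_{a,b}$, and finally \eqref{eq-G_d=G_dG_<d} to repackage $G_r^{=d}(i;\bfz)G_r^{<d}(j;\bfz)$ as $G_r^{=d}((i,j);\bfz)$. The only difference is organizational: the paper first splits by $f=g$ versus $f\neq g$ in $A_{+,d}$ and runs two separate partial-fraction computations (yielding the $E_{r-1}(j;\bfz')G_r^{=d}(i;\bfz)$ and $G_r^{=d}((i,j);\bfz)$ terms respectively), whereas you apply partial fractions once to the full off-diagonal set and recover both terms by stratifying your difference sum $W_m$ over $h=0$ versus $h\neq 0$ --- the same three-way partition of the index set, traversed in a different order.
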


\begin{rem}
    We view Proposition \ref{prop-G_d(r)G_d(s)} as a higher rank analog of
    \[
    S_d(a)S_d(b)=S_d(a+b)+\sum_{\substack{i+j=a+b \\ q-1 \mid j}}\Delta^{i,j}_{a,b}S_d(i,j)
    \]
    (see \cite[Remark 3.2]{Chen2015}).
\end{rem}

\begin{proof}[Proof of Proposition \ref{prop-G_d(r)G_d(s)}]
    Recall that for any two distinct non-zero elements $f,g$ in an integral domain, we have the following well-known partial fraction decomposition
    \begin{equation}    \label{eq-partial-frac}
        \frac{1}{f^a g^b}=\sum_{i+j=a+b}\frac{1}{(f-g)^j}\left( \frac{(-1)^b\binom{j-1}{b-1}}{f^i}+\frac{(-1)^{j-a}\binom{j-1}{a-1}}{g^i} \right).
    \end{equation}

    Consider
    \begin{align*}
        &G_r^{=d}(a; \bfz)G_r^{=d}(b; \bfz)
        =\sum_{f, g\in A_{+,d}} G^{\Lamz}_a(t_{\Lamz}(fz_1))G^{\Lamz}_b(t_{\Lamz}(gz_1)) \\
        ={} &\sum_{f\in A_{+,d}} G^{\Lamz}_a(t_{\Lamz}(fz_1))G^{\Lamz}_b(t_{\Lamz}(fz_1))
        +\sum_{\substack{f,g\in A_{+,d} \\ f\neq g} } G^{\Lamz}_a(t_{\Lamz}(fz_1))G^{\Lamz}_b(t_{\Lamz}(gz_1)).
    \end{align*}
    For the first summand, we have 
    \begin{align*}
        &\sum_{f\in A_{+,d}}G^{\Lamz}_a(t_{\Lamz}(fz_1))G^{\Lamz}_b(t_{\Lamz}(fz_1))  \\
        ={} &\sum_{\substack{f\in A_{+,d} \\ \bm{g},\bm{h}\in A^{r-1}}} \frac{1}{(fz_1+\left\langle \bm{g},\mathbf{z}'\right\rangle)^a(fz_1+\left\langle \bm{h},\mathbf{z}'\right\rangle)^b} \quad\text{(by \eqref{eq-Goss-polynomials})} \\
        ={} &\begin{multlined}[t]
            \sum_{\substack{f\in A_{+,d} \\ \bm{g}\in A^{r-1}}}\frac{1}{(fz_1+\left\langle \bm{g},\mathbf{z}'\right\rangle)^{a+b}}
            \\
          + \sum_{\substack{f\in A_{+,d} \\ \bm{g},\bm{h}\in A^{r-1}, \bm{g}\neq \bm{h}}} \sum_{i+j=a+b} \frac{1}{\left\langle \bm{g}-\bm{h},\mathbf{z}'\right\rangle^j}\left( \frac{(-1)^b\binom{j-1}{b-1}}{(fz_1+\left\langle \bm{g},\mathbf{z}'\right\rangle)^i} + \frac{(-1)^{j-a}\binom{j-1}{a-1}}{(fz_1+\left\langle \bm{h},\mathbf{z}'\right\rangle^i} \right)  \quad\text{(by \eqref{eq-partial-frac})}
        \end{multlined} \\
        ={} &\begin{multlined}[t] G_r^{=d}(a+b;\mathbf{z})
        + \sum_{\substack{f\in A_{+,d} \\ \bm{g},\bm{h}\in A^{r-1}, \bm{g}\neq 0}} \sum_{i+j=a+b} \frac{1}{\left\langle \bm{g},\mathbf{z}'\right\rangle^j}\left( \frac{(-1)^b\binom{j-1}{b-1}}{(fz_1+\left\langle \bm{h},\mathbf{z}'\right\rangle)^i}+\frac{(-1)^{j-a}\binom{j-1}{a-1}}{(fz_1+\left\langle \bm{h}-\bm{g},\mathbf{z}'\right\rangle)^i} \right) \\ \quad\text{(by \eqref{eq-goss-power-sum-analog})}.
        \end{multlined}
    \end{align*}
    For a fixed pair $(i,j)$, we have
    \begin{align*}
        \sum_{\substack{f\in A_{+,d} \\ \bm{g},\bm{h}\in A^{r-1}, \bm{g}\neq 0}} \frac{1}{\left\langle \bm{g},\mathbf{z}'\right\rangle^j} \frac{(-1)^b\binom{j-1}{b-1}}{(fz_1+\left\langle \bm{h},\mathbf{z}'\right\rangle)^i}
        &= \sum_{\epsilon\in\Fq^\times} \sum_{\substack{f\in A_{+,d} \\ \bm{g},\bm{h}\in A^{r-1}, \langle\bm{g}, \mathbf{z}'\rangle \succ 0}} \frac{1}{(\epsilon \langle \bm{g},\mathbf{z}'\rangle)^j} \frac{(-1)^b\binom{j-1}{b-1}}{(fz_1+\left\langle \bm{h},\mathbf{z}'\right\rangle)^i} \\
        &= \begin{cases}
            0, &\text{if } q-1\nmid j, \\
            \displaystyle-\sum_{\substack{f\in A_{+,d} \\ \bm{g}, \bm{h}\in A^{r-1}, \langle\bm{g}, \mathbf{z}'\rangle \succ 0}} \frac{1}{\left\langle \bm{g},\mathbf{z}'\right\rangle^j} \frac{(-1)^b\binom{j-1}{b-1}}{(fz_1+\left\langle \bm{h},\mathbf{z}'\right\rangle)^i}, &\text{if } q-1\mid  j.
        \end{cases}
    \end{align*}
    Here we use the fact that $\sum_{\epsilon \in \Fqst} \epsilon^j = 0$ if $q-1 \nmid j$ and $-1$ otherwise.
    Similarly,
    \begin{align*}
        &\sum_{\substack{f\in A_{+,d} \\ \bm{g},\bm{h}\in A^{r-1}, \bm{g}\neq 0}} \frac{1}{\left\langle \bm{g},\mathbf{z}'\right\rangle^j}\frac{(-1)^{j-a}\binom{j-1}{a-1}}{(fz_1+\left\langle \bm{h}-\bm{g},\mathbf{z}'\right\rangle)^i} \\
        ={} &\sum_{\substack{f\in A_{+,d} \\ \bm{g},\bm{h}\in A^{r-1}, \bm{g}\neq 0}} \frac{1}{\left\langle \bm{g},\mathbf{z}'\right\rangle^j}\frac{(-1)^{j-a}\binom{j-1}{a-1}}{(fz_1+\left\langle \bm{h},\mathbf{z}'\right\rangle)^i} \\
        ={} &\begin{cases}
            0, &\text{if } q-1\nmid j, \\
            \displaystyle-\sum_{\substack{f \in A_{+,d} \\ \bm{g},\bm{h}\in A^{r-1}, \langle\bm{g},\mathbf{z}'\rangle \succ 0}} \frac{1}{\left\langle \bm{g},\mathbf{z}'\right\rangle^j} \frac{(-1)^{j-a}\binom{j-1}{a-1}}{(fz_1+\left\langle \bm{h},\mathbf{z}'\right\rangle)^i}, &\text{if } q-1\mid  j.
        \end{cases}
    \end{align*}
    Therefore, the first summand becomes
    \begin{align*}
        & G_r^{=d}(a+b;\mathbf{z})
        + \sum_{\substack{f\in A_{+,d} \\ \bm{g},\bm{h}\in A^{r-1}, \bm{g}\neq 0}} \sum_{i+j=a+b} \frac{1}{\left\langle \bm{g},\mathbf{z}'\right\rangle^j}\left( \frac{(-1)^b\binom{j-1}{b-1}}{(fz_1+\left\langle \bm{h},\mathbf{z}'\right\rangle)^i}+\frac{(-1)^{j-a}\binom{j-1}{a-1}}{(fz_1+\left\langle \bm{h}-\bm{g},\mathbf{z}'\right\rangle)^i} \right) \\
        ={} & G_r^{=d}(a+b;\mathbf{z})
        + \sum_{\substack{i+j=a+b \\ q-1\mid  j}} \sum_{\substack{f\in A_{+,d} \\ \bm{g},\bm{h}\in A^{r-1},\langle \bm{g},\mathbf{z}'\rangle \succ 0}} \frac{1}{\left\langle \bm{g}, \bfz' \right\rangle^j}\frac{(-1)^{b-1}\binom{j-1}{b-1}+(-1)^{j-a-1}\binom{j-1}{a-1}}{(fz_1+\left\langle \bm{h},\mathbf{z}'\right\rangle)^i} \\
        ={} & G_r^{=d}(a+b;\mathbf{z})
        + \sum_{\substack{i+j=a+b \\ q-1\mid  j}} \Delta^{i,j}_{a,b}E_{r-1}(j;\mathbf{z}')G_r^{=d}(i;\mathbf{z})  \quad\text{(by \eqref{eq-goss-power-sum-analog})}.
    \end{align*}
    For the second summand, we note that it vanishes if $d=0$.
    For $d>0$, a similar argument shows
    \begin{align*}
        & \sum_{\substack{f, g\in A_{+,d} \\ f\neq g}}
        G_a^{\Lambda_{\mathbf{z}'}}(t_{\Lambda_{\mathbf{z}'}}(f z_1))G_b^{\Lambda_{\mathbf{z}'}}(t_{\Lambda_{\mathbf{z}'}}(gz_1))
        = \sum_{\substack{f,g\in A_{+,d} \\ f\neq g \\ \bm{f},\bm{g}\in A^{r-1}}} 
        \frac{1}{(fz_1 + \left\langle \bm{f}, \mathbf{z}' \right\rangle)^a (gz_1+\left\langle \bm{g}, \mathbf{z}'\right\rangle)^b} \quad\text{(by \eqref{eq-Goss-polynomials})} \\
        ={} & \sum_{\substack{f,g\in A_{+,d} \\ f\neq g\\ \bm{f},\bm{g}\in A^{r-1}}}~\sum_{i+j=a+b}\frac{1}{((f-g)z_1+\left\langle \bm{f}-\bm{g},\mathbf{z}'\right\rangle)^j}
        \left( \frac{(-1)^b\binom{j-1}{b-1}}{(fz_1+\left\langle \bm{f},\mathbf{z}'\right\rangle)^i}+\frac{(-1)^{j-a}\binom{j-1}{a-1}}{(gz_1 + \left\langle \bm{g},\mathbf{z}'\right\rangle)^i} \right) \quad\text{(by \eqref{eq-partial-frac})} \\
        ={} & \sum_{i+j=a+b} \left( \sum_{\substack{f\in A_{+,d} \\ h\in A_{<d} \\ \bm{f},\bm{h}\in A^{r-1}}} \frac{(-1)^b\binom{j-1}{b-1}}{(hz_1+\left\langle \bm{h},\mathbf{z}'\right\rangle)^j(fz_1+\left\langle \bm{f},\mathbf{z}'\right\rangle)^i}
        + \sum_{\substack{g\in A_{+,d} \\ h\in A_{<d} \\ \bm{g},\bm{h}\in A^{r-1}}} \frac{(-1)^{j-a}\binom{j-1}{a-1}}{(hz_1+ \left\langle \bm{h},\mathbf{z}'\right\rangle)^j(gz_1+\left\langle \bm{g},\mathbf{z}'\right\rangle)^i} \right)\\
        ={} & \sum_{i+j=a+b}~\sum_{\substack{f,g\in A_+ \\ d=\deg f>\deg g \\ \bm{f},\bm{g}\in A^{r-1} }}~\sum_{\epsilon\in\Fq^\times}\frac{1}{\epsilon^j}\frac{(-1)^b\binom{j-1}{b-1}+(-1)^{j-a}\binom{j-1}{a-1}}{(fz_1+\left\langle \bm{f},\mathbf{z}'\right\rangle)^i(gz_1+ \left\langle \bm{g},\mathbf{z}'\right\rangle)^j} =\sum_{\substack{i+j=a+b \\ q-1\mid  j}}\Delta^{i,j}_{a,b}G_r^{=d}(i,j)  \quad\text{(by \eqref{eq-goss-power-sum-analog})}.
    \end{align*}
    This completes the proof.
\end{proof}

\section{Proof of the Main Theorem}

\subsection{The \texorpdfstring{$q$}{q}-shuffle Algebra \texorpdfstring{$\Ecal$}{E} and the Map \texorpdfstring{$\bfG_r$}{G\_ r}}

In view of Proposition \ref{prop-expansion-MES}, to establish the $q$-shuffle relations for the multiple Eisenstein series, we should define a new $q$-shuffle algebra to deal with the $q$-shuffle relations for multiple Goss sums.

\begin{df}\label{df-Ecal}
    Let $\Tcal$ be the free monoid generated by the set $\{x_k, y_\ell \mid  k, \ell\in \NN\}$, subject to the commutativity relations $x_k y_\ell = y_\ell x_k$ for all $k,\ell\in \NN$,
    and let $\Ecal$ be the $\Fp$-vector space generated by $\Tcal$.
    For a non-empty index $\ww{a} = (a_1,\ldots, a_m)\in\NN^m$, we define $x_{\ww{a}} := x_{a_1}\cdots x_{a_m}$ and $y_{\ww{a}} := y_{a_1}\cdots y_{a_m}$. For $\ww{a}=\emptyset$, we define $x_{\emptyset} = y_{\emptyset} := 1$.
    An element $x_{\ww{a}}$ or $y_{\ww{a}}$ is called a \textit{word of depth $m$}.
    We define the \textit{$q$-shuffle product} $\ast$ on $\Ecal$ inductively on the sum of depths as follows:
    \begin{enumerate}
        \item For the empty word $x_\emptyset = y_\emptyset = 1$ and any $\afk \in \Ecal$, define 
        \begin{equation}  \label{first-relation} 
            1 \ast \afk=\afk \ast 1 = \afk.
        \end{equation}
        
        \item For any indices $\ww{a}$ and $\ww{b}$, define 
        \begin{equation} \label{second-relation}
            x_{\ww{a}} \ast y_{\ww{b}}
        = x_{\ww{a}} y_{\ww{b}}
        = y_{\ww{b}} x_{\ww{a}} 
        = y_{\ww{b}} \ast x_{\ww{a}}.
        \end{equation}
        
        \item For non-empty indices $\ww{a} = (a_1,\ldots, a_m) \in \NN^m$ and $\ww{b} = (b_1,\ldots, b_n)\in \NN^n$, define
        \begin{multline} \label{third-relation}
            x_{\ww{a}} \ast x_{\ww{b}}
            = x_{a_1} ( x_{\ww{a}^{(1)}} \ast x_{\ww{b}} ) 
            + x_{b_1} ( x_{\ww{a}} \ast x_{\ww{b}^{(1)}} ) 
            + x_{a_1 + b_1} ( x_{\ww{a}^{(1)}} \ast x_{\ww{b}^{(1)}} ) \\
            + \sum_{\substack{i+j = a_1 + b_1 \\ q-1 \mid  j}} 
            \Delta^{i,j}_{a_1, b_1} x_i ( ( x_{\ww{a}^{(1)}} \ast x_{\ww{b}^{(1)}} ) * x_j ).
        \end{multline}
        
        \item For non-empty indices $\ww{a} = (a_1,\ldots, a_m) \in \NN^m$ and $\ww{b} = (b_1,\ldots, b_n)\in \NN^n$, define  
        \begin{multline} \label{fourth-relation}
            y_{\ww{a}} \ast y_{\ww{b}}
            = y_{a_1} ( y_{\ww{a}^{(1)}} \ast y_{\ww{b}} ) 
            + y_{b_1} ( y_{\ww{a}} \ast y_{\ww{b}^{(1)}} ) 
            + y_{a_1+b_1} ( y_{\ww{a}^{(1)}} \ast y_{\ww{b}^{(1)}} ) \\
            + \sum_{\substack{i+j = a_1 + b_1 \\ q-1 \mid  j}} 
            \Delta^{i,j}_{a_1,b_1} y_i ( ( y_{\ww{a}^{(1)}} \ast y_{\ww{b}^{(1)}} ) \ast x_j )
            + \sum_{\substack{i+j = a_1 + b_1 \\ q-1 \mid  j}} 
            \Delta^{i,j}_{a_1,b_1} y_i ( ( y_{\ww{a}^{(1)}} \ast y_{\ww{b}^{(1)}} ) * y_j ).
        \end{multline}
        
        \item For any indices $\ww{a},\ww{a}',\ww{b},\ww{b}'$, define
        \begin{equation} \label{fifth-relation}
            (x_{\ww{a}} y_{\ww{b}}) \ast (x_{\ww{a}'}y_{\ww{b}'}) 
        = (x_{\ww{a}}*x_{\ww{a}'}) \ast (y_{\ww{b}}*y_{\ww{b}'}).
        \end{equation}
        \item Expand the product $\ast$ to the $\Fp$-vector space $\Ecal$ by the distributive law.
    \end{enumerate}
\end{df}

One sees that $\Ecal$ is a commutative $\F_p$-algebra, which contains $\Rcal$ as a subalgebra (recall Definition \ref{df-zeta-values-as-words}).

\begin{df}\label{df-bfG} 
    For each $d\in \ZZ_{\ge 0}$, we define $\bfG_{r}^{=d}:\Ecal\to \mathcal{O}(\Omega^r)$ to be the unique $\mathbb{F}_p$-linear map such that for any indices $\ww{a}$ and $\ww{b}$,
    \[
    \bfG_r^{=d}(x_{\ww{a}}y_{\ww{b}}) := E_{r-1}(\ww{a}; \bfz')G_r^{=d}(\ww{b}; \bfz).
    \]
    Here, we recall that $E_1(\ww{a};\bfz')=\zeta_A(\ww{a})$ and $E_{r-1}(\emptyset; \bfz)= 1$ by convention.
    We further define for $d\in \NN$ and $\afk\in \Ecal$,
    \[
    \bfG_r^{<d}(\afk) := \sum_{i=0}^{d-1}\bfG_r^{=i}(\afk) \quad \text{and}\quad
    \bfG_r(\afk) := \lim_{d\to \infty}\bfG_r^{<d}(\afk).
    \]
\end{df}

Note that for any non-empty index $\ww{a}$, we have (recall the convention in Definition \ref{df-Goss-sum})
\[
\bfG_r^{=d}(x_{\ww{a}})
=\bfG_r^{=d}(x_{\ww{a}}y_{\emptyset})
=E_{r-1}(\ww{a}; \bfz')G_r^{=d}(\emptyset; \bfz)
=E_{r-1}(\ww{a}; \bfz')\delta_{d,0}
\]
and
\[
\bfG_r^{=d}(y_{\ww{a}})
=\bfG_r^{=d}(x_{\emptyset}y_{\ww{a}})
=E_{r-1}(\emptyset; \bfz')G_r^{=d}(\ww{a}; \bfz)
=G_r^{=d}(\ww{a}; \bfz).
\]
In particular, for $d\in\NN$, we have
\begin{equation}   \label{eq-G-remark}
    \bfG_r^{<d}(x_{\ww{a}})
    =\sum_{i=0}^{d-1} E_{r-1}(\ww{a}; \bfz') \delta_{i,0}
    =E_{r-1}(\ww{a}; \bfz')
    \quad \text{and} \quad
    \bfG_r^{<d}(y_{\ww{a}})
    =\sum_{i=0}^{d-1} G_{r}^{=i}(\ww{a};\bfz)
    =G_r^{<d}(\ww{a};\bfz).
\end{equation}
Moreover, it follows from \eqref{eq-G_d=G_dG_<d} that for any $a \in \NN$ and non-empty index $\ww{b}$, we have
\begin{equation}   \label{eq-G_d=G_dG_<d-bf-version}
    \bfG_r^{=d}(y_{a}y_{\ww{b}})
    =\bfG_r^{=d}(y_{a})\bfG_r^{<d}(y_{\ww{b}}).
\end{equation}

Recall that $\bfE_r$ is defined in Definition \ref{df-E-hat}.
\begin{prop}\label{prop-bfG-ring-hom}
    For $r\ge 2$, we have that if  $\bfE_{r-1}$ is an $\F_p$-algebra homomorphism, then so is $\bfG_r^{<d}$ for each $d\in\NN$.
    Consequently, $\bfG_r$ is also an $\mathbb{F}_p$-algebra homomorphism.
\end{prop}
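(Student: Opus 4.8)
The plan is to prove the stronger assertion that each $\bfG_r^{<d}$ (for $d\in\NN$) is an $\Fp$-algebra homomorphism; the claim for $\bfG_r=\lim_{d\to\infty}\bfG_r^{<d}$ then follows immediately, because multiplication in $\Ocal(\Omega^r)$ is continuous for the uniform-convergence topology defining these limits, so $\bfG_r(\afk\ast\bfk)=\lim_d\bfG_r^{<d}(\afk)\bfG_r^{<d}(\bfk)=\bfG_r(\afk)\bfG_r(\bfk)$. By $\Fp$-linearity and the distributive law it suffices to check $\bfG_r^{<d}(w_1\ast w_2)=\bfG_r^{<d}(w_1)\bfG_r^{<d}(w_2)$ on basis words $w_1=x_{\ww a}y_{\ww b}$ and $w_2=x_{\ww a'}y_{\ww b'}$. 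Two bookkeeping identities organize the whole argument. The first, read off directly from Definition~\ref{df-bfG} and \eqref{eq-G-remark}, is the \emph{splitting}
\[
\bfG_r^{<d}(x_{\ww a}y_{\ww b})=\bfG_r^{<d}(x_{\ww a})\,\bfG_r^{<d}(y_{\ww b})=E_{r-1}(\ww a;\bfz')\,G_r^{<d}(\ww b;\bfz).
\]
The second, obtained by commuting the leading $y_i$ past the $x$-part (using the relations of Definition~\ref{df-Ecal}) and then invoking \eqref{eq-G_d=G_dG_<d}, is the factorization
\[
\bfG_r^{=d}(y_i\,w)=G_r^{=d}(i;\bfz)\,\bfG_r^{<d}(w)\qquad\text{for every }w\in\Ecal,
\]
generalizing \eqref{eq-G_d=G_dG_<d-bf-version}. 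This is the exact $\Ecal$-analogue of the identity $S_d(\ww a)=S_d(a_1)S_{<d}(\ww a^{(1)})$ underlying Shi's proof of Theorem~\ref{thm-zeta-alg-hom}.

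I would first establish the \emph{$x$-linearity} of $\bfG_r^{<d}$: for $\alpha$ in the subalgebra $\Rcal$ spanned by the $x$-words and any $\beta\in\Ecal$, one has $\bfG_r^{<d}(\alpha\ast\beta)=\bfG_r^{<d}(\alpha)\,\bfG_r^{<d}(\beta)$. By linearity this reduces to $\alpha=x_{\ww a}$ and $\beta=x_{\ww e}y_{\ww f}$, where \eqref{fifth-relation} gives $x_{\ww a}\ast(x_{\ww e}y_{\ww f})=(x_{\ww a}\ast x_{\ww e})\,y_{\ww f}$; applying the splitting together with the hypothesis that $\bfE_{r-1}$ is an algebra homomorphism (so $\bfE_{r-1}(x_{\ww a}\ast x_{\ww e})=E_{r-1}(\ww a;\bfz')E_{r-1}(\ww e;\bfz')$) then closes the computation. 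This is the \emph{only} step that uses the rank-$(r-1)$ induction hypothesis.

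The substance lies in the pure $y$-case, proved by induction on $\dep(\ww a)+\dep(\ww b)$: for all $y$-words, $\bfG_r^{<d}(y_{\ww a}\ast y_{\ww b})=G_r^{<d}(\ww a;\bfz)\,G_r^{<d}(\ww b;\bfz)$. I expand $y_{\ww a}\ast y_{\ww b}$ by \eqref{fourth-relation}, apply $\bfG_r^{=d}$ together with the factorization above, and evaluate each resulting lower-depth product by the inductive hypothesis --- using $x$-linearity for the term $(y_{\ww a^{(1)}}\ast y_{\ww b^{(1)}})\ast x_j$, and the inductive hypothesis (applied summand-by-summand, legitimate since $\ast$ does not raise depth beyond the sum) for $(y_{\ww a^{(1)}}\ast y_{\ww b^{(1)}})\ast y_j$. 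Collecting the diagonal contributions produces the factor $G_r^{=d}(a_1+b_1;\bfz)+\sum_{i+j=a_1+b_1,\,q-1\mid j}\Delta^{i,j}_{a_1,b_1}\big(E_{r-1}(j;\bfz')G_r^{=d}(i;\bfz)+G_r^{=d}((i,j);\bfz)\big)$, which equals $G_r^{=d}(a_1;\bfz)\,G_r^{=d}(b_1;\bfz)$ by Proposition~\ref{prop-G_d(r)G_d(s)}. Refactoring via \eqref{eq-G_d=G_dG_<d}, everything collapses to
\[
\bfG_r^{=d}(y_{\ww a}\ast y_{\ww b})=G_r^{=d}(\ww a;\bfz)G_r^{<d}(\ww b;\bfz)+G_r^{<d}(\ww a;\bfz)G_r^{=d}(\ww b;\bfz)+G_r^{=d}(\ww a;\bfz)G_r^{=d}(\ww b;\bfz),
\]
and summing over $0\le d<D$ telescopes exactly to $G_r^{<D}(\ww a;\bfz)G_r^{<D}(\ww b;\bfz)$. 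Finally, the general case follows from \eqref{fifth-relation}: writing $w_1\ast w_2=(x_{\ww a}\ast x_{\ww a'})\ast(y_{\ww b}\ast y_{\ww b'})$ and applying $x$-linearity, the pure $y$-case, and the splitting recovers $\bfG_r^{<d}(w_1)\bfG_r^{<d}(w_2)$.

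The main obstacle is the bookkeeping in the pure $y$-case. One must verify that the extra summand $\sum\Delta^{i,j}_{a_1,b_1}E_{r-1}(j;\bfz')G_r^{=d}(i;\bfz)$ in Proposition~\ref{prop-G_d(r)G_d(s)}, which has no counterpart in the rank-one setting of Theorem~\ref{thm-zeta-alg-hom}, is matched precisely by the extra term $\sum\Delta^{i,j}_{a_1,b_1}y_i\big((y_{\ww a^{(1)}}\ast y_{\ww b^{(1)}})\ast x_j\big)$ built into \eqref{fourth-relation}, and that, once $\bfG_r^{=d}$ is applied, it combines with the remaining diagonal terms to reconstitute $G_r^{=d}(a_1;\bfz)G_r^{=d}(b_1;\bfz)$. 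After this matching, the telescoping step is formally identical to the rank-one argument, so the genuine work is verifying that the $q$-shuffle algebra $\Ecal$ has been set up so that its $y\ast y$ product mirrors the Goss-power-sum product of Proposition~\ref{prop-G_d(r)G_d(s)} term by term.
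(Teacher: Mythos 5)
Your proposal is correct and follows essentially the same route as the paper: reduce to the five defining relations of $\Ecal$ (your ``splitting'' and ``$x$-linearity'' are the paper's handling of relations \eqref{second-relation}, \eqref{third-relation}, and \eqref{fifth-relation}, with the induction hypothesis on $\bfE_{r-1}$ entering only in the $x\ast x$ step), then settle the pure $y$-case by induction on total depth using Proposition~\ref{prop-G_d(r)G_d(s)} together with the factorizations \eqref{eq-G_d=G_dG_<d} and \eqref{eq-G_d=G_dG_<d-bf-version}. Your ``telescoping'' identity is exactly the paper's decomposition of the double sum over $(d_1,d_2)$ into the ranges $d_1>d_2$, $d_2>d_1$, $d_1=d_2$, so the two arguments coincide.
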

\begin{proof}
    It suffices to prove that $\bfG_r^{<d}:\mathcal{E}\to \mathcal{O}(\Omega^r)$ is compatible with the $q$-shuffle relations in Definition \ref{df-Ecal}.
    \begin{enumerate}
        \item For relation \eqref{first-relation}, note that
        \[
        \bfG_r^{<d}(1) = \bfG_r^{<d}(x_{\emptyset}) \overset{\eqref{eq-G-remark}}{ = } E_{r-1}(\emptyset; \bfz') = 1.
        \]
        Therefore,
        \[
        \bfG_r^{<d}(1*\afk) = \bfG_r^{<d}(\afk*1) = \bfG_r^{<d}(\afk) = \bfG_r^{<d}(1)\bfG_r^{<d}(\afk).
        \]
        
        \item \label{proof-ii} For relation \eqref{second-relation}, we have
        \begin{align*}
            \bfG_r^{<d}(x_{\ww{a}}*y_{\ww{b}})
            &= \sum_{i=0}^{d-1} \bfG_{r}^{=i}(x_{\ww{a}}y_{\ww{b}})
            = \sum_{i=0}^{d-1} E_{r-1}(\ww{a}; \bfz') G_{r}^{=i}(\ww{b}; \bfz) \\
            &= E_{r-1}(\ww{a}; \bfz')\bfG_r^{<d}(y_{\ww{b}})
            \overset{\eqref{eq-G-remark}}{=} \bfG_r^{<d}(x_{\ww{a}}) \bfG_r^{<d}(y_{\ww{b}}).
        \end{align*}
        
        \item \label{proof-iii} For relation \eqref{third-relation}, we write
        \[
        x_{\ww{a}}*x_{\ww{b}}=\sum_{i}\epsilon_ix_{\ww{v}_i}
        \]
         for some $\epsilon_i \in \FF_p$ and indices $\ww{v}_i$.
        By assumption, as functions on $\Omega^{r-1}$,
        \[
        E_{r-1}(\ww{a};\mathbf{w})E_{r-1}(\ww{b};\mathbf{w})
        = \bfE_{r-1}(x_{\ww{a}}*x_{\ww{b}})
        = \sum_{i}\epsilon_i\bfE_{r-1}(x_{\ww{v}_i})
        =\sum_{i}\epsilon_iE_{r-1}(\ww{v}_i;\mathbf{w}).
        \]
        Therefore,
        \begin{align*}
            \bfG_r^{<d}(x_{\ww{a}}*x_{\ww{b}})
            &= \sum_{i}\epsilon_i\bfG_r^{<d}(x_{\ww{v}_i})
            \overset{\eqref{eq-G-remark}}{=} \sum_{i}\epsilon_iE_{r-1}(\ww{v}_i;\mathbf{z}') \\
            &=E_{r-1}(\ww{a};\mathbf{z}')E_{r-1}(\ww{b};\mathbf{z}')
            \overset{\eqref{eq-G-remark}}{=} \bfG_r^{<d}(x_{\ww{a}})\bfG_r^{<d}(x_{\ww{b}}).
        \end{align*}
        
        \item \label{proof-iv} For relation \eqref{fourth-relation}, namely, 
        $\bfG_r^{<d}(y_{\ww{a}} * y_{\ww{b}}) = \bfG_r^{<d}(y_{\ww{a}}) \bfG_r^{<d}(y_{\ww{b}})$, 
        note that the result reduces to the first relation when either $\dep(\ww{a})$ or $\dep(\ww{b})$ is zero. Assuming both $\ww{a}$ and $\ww{b}$ are non-empty, we proceed by induction on the total depth $\dep(\ww{a}) + \dep(\ww{b})$.
        For the base case $\dep(\ww{a}) = \dep(\ww{b}) = 1$, we write $\ww{a}=a$ and $\ww{b}=b$.
        Then we have
        \begin{align*}
            &\bfG_r^{<d}(y_a)\bfG_r^{<d}(y_b)
            = \left(\sum_{d>d_1}\bfG^{=d_1}_{r}(y_a)\right)\left(\sum_{d>d_2}\bfG^{=d_2}_{r}(y_b)\right)  \\
            ={} &\sum_{d>d_1>d_2} G^{=d_1}_{r}(a;\mathbf{z})G^{=d_2}_{r}(b;\mathbf{z})
            + \sum_{d>d_2>d_1} G_r^{=d_1}(a;\mathbf{z})G_r^{=d_2}(b;\mathbf{z})
            + \sum_{d>d_1=d_2} G_r^{=d_1}(a;\mathbf{z})G_r^{=d_2}(b;\mathbf{z})  \\
            ={} &\sum_{d>d_1}G_r^{=d_1}(a;\mathbf{z})G_r^{<d_1}(b;\mathbf{z})+\sum_{d>d_2}G_r^{=d_2}(b;\mathbf{z})G_r^{<d_2}(a;\mathbf{z})+\sum_{d>d_3}G_r^{=d_3}(a;\mathbf{z})G_r^{=d_3}(b;\mathbf{z}) \\
            ={} &\begin{multlined}[t]
                \sum_{d>d_1}G^{=d_1}_{r}((a,b);\mathbf{z})+\sum_{d>d_2}G^{<d_2}_{r}((b,a);\mathbf{z}) \\
                +\sum_{d>d_3}\Bigg(G_r^{=d_3}(a+b;\mathbf{z}) +\sum_{\substack{i+j = a+b \\q-1\mid j}}\Delta^{i,j}_{a,b}G_r^{=d_3}(i;\bfz)E_{r-1}(j;\bfz')
                +\sum_{\substack{i+j = a+b \\ q-1\mid  j}}\Delta^{i,j}_{a,b}G_r^{=d_3}((i,j);\mathbf{z}) \Bigg) \\ 
                \text{(by \eqref{eq-G_d=G_dG_<d} and Proposition \ref{prop-G_d(r)G_d(s)})}
            \end{multlined} \\
            ={} &\begin{multlined}[t]
                \bfG_r^{<d}(y_a y_b)+\bfG_r^{<d}(y_b y_a)+\bfG_r^{<d}(y_{a+b})  \\
                + \sum_{\substack{i+j = a+b\\q-1\mid  j}}\Delta^{i,j}_{a,b}\bfG_r^{<d}(y_i x_j)+\sum_{\substack{i+j = a+b\\q-1\mid  j}}\Delta^{i,j}_{a,b}\bfG_r^{<d}(y_i y_j)   \quad\text{(by \eqref{eq-G-remark})}
            \end{multlined}  \\
            ={} &\bfG_r^{<d}(y_a * y_b) \quad\text{(by \eqref{fourth-relation})}.
        \end{align*}
        Thus, the base case is proved.

        Given $N>2$, suppose the claim holds for any indices with total depth $\le N-1$.
        Let $\ww{a}$ and $\ww{b}$ be non-empty indices with $\dep(\ww{a})+\dep(\ww{b})=N$. Then
        \begin{align*}
            &\bfG_r^{<d}(y_{\ww{a}})\bfG_r^{<d}(y_{\ww{b}})
            =\left(\sum_{d>d_1}\bfG_r^{=d_1}(y_{\ww{a}})\right)\left(\sum_{d>d_2}\bfG_r^{=d_2}(y_{\ww{b}})\right) \\
            ={} &\sum_{d>d_1>d_2}G_r^{=d_1}(\ww{a};\bfz)G_r^{=d_2}(\ww{b};\bfz)+\sum_{d>d_2>d_1}G_r^{=d_1}(\ww{a};\bfz)G_r^{=d_2}(\ww{b};\bfz)+\sum_{d>d_1=d_2}G_r^{=d_1}(\ww{a};\bfz)G_r^{=d_2}(\ww{b};\bfz)  \\
            ={} &\begin{multlined}[t]
                \sum_{d>d_1}G_r^{=d_1}(a_1;\bfz)G_r^{<d_1}(\ww{a}^{(1)};\bfz)G_r^{<d_1}(\ww{b};\bfz)
                + \sum_{d>d_2}G_r^{=d_2}(b_1;\bfz)G_r^{<d_2}(\ww{a};\bfz)G_r^{<d_2}(\ww{b}^{(1)};\bfz)  \\
                + \sum_{d>d_3}G_r^{=d_3}(a_1;\bfz)G_r^{=d_3}(b_1;\bfz)G_r^{<d_3}(\ww{a}^{(1)};\bfz)G_r^{<d_3}(\ww{b}^{(1)};\bfz) \quad\text{(by \eqref{eq-G_d=G_dG_<d})}.
            \end{multlined} 
        \end{align*}
        For the first summand, we have
        \begin{alignat*}{2}
            &\sum_{d>d_1}G_r^{=d_1}(a_1;\mathbf{z})G_r^{<d_1}(\ww{a}^{(1)};\mathbf{z})G_r^{<d_1}(\ww{b};\mathbf{z}) \\
            ={} &\sum_{d>d_1}\bfG_r^{=d_1}(y_{a_1})\bfG_r^{<d_1}(y_{\ww{a}^{(1)}}*y_{\ww{b}}) \quad &&\text{(by \eqref{eq-G-remark} and induction hypothesis)} \\
            ={} &\sum_{d>d_1}\bfG_r^{=d_1} \left(y_{a_1}(y_{\ww{a}^{(1)}}*y_{\ww{b}})\right) \quad &&\text{(by \eqref{eq-G_d=G_dG_<d-bf-version})}  \\
            ={} &\bfG_r^{<d}\left(y_{a_1}(y_{\ww{a}^{(1)}}*y_{\ww{b}})\right).
        \end{alignat*}
        Similarly, for the second summand,
        \begin{align*}
            \sum_{d>d_2}G_r^{=d_2}(b_1;\mathbf{z})G_r^{<d_2}(\ww{a};\mathbf{z})G_{r}^{<d_2}(\ww{b}^{(1)};\mathbf{z})
            &=\bfG_r^{<d} \left( y_{b_1}(y_{\ww{a}}*y_{\ww{b}^{(1)}}) \right).
        \end{align*}
        For the third summand, we have
        \begin{align*}
            &\sum_{d>d_3}G_r^{=d_3}(a_1;\mathbf{z})G_r^{=d_3}(b_1;\mathbf{z})G_r^{<d_3}(\ww{a}^{(1)};\mathbf{z})G_r^{<d_3}(\ww{b}^{(1)};\mathbf{z})\\
            ={} &\begin{multlined}[t]
                \sum_{d>d_3} \Bigg( G^{=d_3}_{r}(a_1+b_1;\mathbf{z})G_r^{<d_3}(\ww{a}^{(1)};\mathbf{z})G_r^{<d_3}(\ww{b}^{(1)};\mathbf{z}) \\
                + \sum_{\substack{i+j = a_1+b_1\\q-1\mid  j}}\Delta^{i,j}_{a_1,b_1}G_r^{=d_3}(i;\mathbf{z})G_r^{<d_3} (\ww{a}^{(1)};\mathbf{z})G_r^{<d_3}(\ww{b}^{(1)};\mathbf{z})E_{r-1}(j;\bfz') \\
                + \sum_{\substack{i+j=a_1+b_1\\q-1\mid  j}}\Delta^{i,j}_{a_1,b_1} G_r^{=d_3}(i;\mathbf{z})G^{<d_3}_{r}(\ww{a}^{(1)};\mathbf{z}) G_r^{<d_3}(\ww{b}^{(1)};\mathbf{z})G_r^{<d_3}(j;\mathbf{z}) \Bigg) \\
                \text{(by Proposition \ref{prop-G_d(r)G_d(s)} and \eqref{eq-G_d=G_dG_<d})}
            \end{multlined}\\
            ={} &\begin{multlined}[t]
                \sum_{d>d_3} \Bigg( \bfG_r^{=d_3}(y_{a_1+b_1})\bfG_r^{<d_3}(y_{\ww{a}^{(1)}} * y_{\ww{b}^{(1)}})
                + \sum_{\substack{i+j = a_1+b_1\\q-1\mid  j}}\Delta^{i,j}_{a_1,b_1}\bfG_r^{=d_3}(y_i)\bfG_r^{<d_3}\left( (y_{\ww{a}^{(1)}} * y_{\ww{b}^{(1)}})*x_j \right) \\
                + \sum_{\substack{i+j = a_1+b_1\\q-1\mid  j}}\Delta^{i,j}_{a_1,b_1}\bfG_r^{=d_3}(y_i)\bfG_r^{<d_3}\left( (y_{\ww{a}^{(1)}} * y_{\ww{b}^{(1)}}) *y_j \right) \Bigg)
                \quad \text{(by \eqref{eq-G-remark} and induction hypothesis)}
            \end{multlined} \\
            ={} &\begin{multlined}[t]
                \sum_{d>d_3} \Bigg( \bfG_r^{= d_3 }\left(y_{a_1+b_1}(y_{\ww{a}^{(1)}} * y_{\ww{b}^{(1)}}) \right)
                + \sum_{\substack{i+j = a_1+b_1\\q-1\mid  j}}\Delta^{i,j}_{a_1,b_1}\bfG_r^{= d_3}\left( y_i \left((y_{\ww{a}^{(1)}} * y_{\ww{b}^{(1)}})*x_j \right)\right) \\
                + \sum_{\substack{i+j = a_1+b_1\\q-1\mid  j}}\Delta^{i,j}_{a_1,b_1}\bfG_r^{= d_3}\left( y_i \left((y_{\ww{a}^{(1)}} * y_{\ww{b}^{(1)}}) *y_j \right)\right) \Bigg)
                \quad \text{(by \eqref{eq-G_d=G_dG_<d-bf-version})}
            \end{multlined} \\
            ={} &\begin{multlined}[t]
                \bfG_r^{< d}\left(y_{a_1+b_1}(y_{\ww{a}^{(1)}} * y_{\ww{b}^{(1)}}) \right)
                + \sum_{\substack{i+j = a_1+b_1\\q-1\mid  j}}\Delta^{i,j}_{a_1,b_1}\bfG_r^{< d}\left( y_i \left( (y_{\ww{a}^{(1)}} * y_{\ww{b}^{(1)}})*x_j \right) \right) \\
                + \sum_{\substack{i+j = a_1+b_1\\q-1\mid  j}}\Delta^{i,j}_{a_1,b_1}\bfG_r^{< d}\left( y_i\left( (y_{\ww{a}^{(1)}} * y_{\ww{b}^{(1)}}) *y_j \right) \right).
            \end{multlined}
        \end{align*}
        
        Combining these three summands, we have
        \begin{align*}
            &\bfG_r^{<d}(y_{\ww{a}})\bfG_r^{<d}(y_{\ww{b}}) \\
            ={} &\begin{multlined}[t]
                \bfG_r^{<d} \left(y_{a_1}(y_{\ww{a}^{(1)}}*y_{\ww{b}}) \right)
                + \bfG_r^{<d} \left(y_{b_1}(y_{\ww{a}}*y_{\ww{b}^{(1)}}) \right)
                + \bfG_r^{<d} \left(y_{a_1+b_1}(y_{\ww{a}^{(1)}}*y_{\ww{b}^{(1)}})\right) \\
                + \sum_{\substack{i+j = a_1+b_1\\q-1\mid  j}} \Delta^{i,j}_{a_1,b_1} \bfG_r^{<d}\left(y_i\left( (y_{\ww{a}^{(1)}}*y_{\ww{b}^{(1)}})*x_j \right) \right)
                + \sum_{\substack{i+j = a_1+b_1\\q-1\mid  j}} \Delta^{i,j}_{a_1,b_1} \bfG_r^{<d}\left(y_i\left( (y_{\ww{a}^{(1)}}*y_{\ww{b}^{(1)}})*y_j \right) \right)
            \end{multlined} \\
            ={} &\bfG_r^{<d}(y_{\ww{a}}*y_{\ww{b}}) \quad\text{(by \eqref{fourth-relation})}.
        \end{align*}
        Therefore, the result holds by induction.
        
        \item For relation \eqref{fifth-relation}, namely, $\bfG_r^{<d}((x_{\ww{a}} y_{\ww{b}}) \ast (x_{\ww{a}'}y_{\ww{b}'}))
        = \bfG_r^{<d}(x_{\ww{a}} y_{\ww{b}}) 
        \bfG_r^{<d}(x_{\ww{a}'} y_{\ww{b}'})$, first note that since $x_{\ww{a}} \ast x_{\ww{a'}} \in \Rcal$, we have
        \begin{alignat*}{2}
            \bfG_r^{<d} \left( (x_{\ww{a}} \ast (x_{\ww{a}'} y_{\ww{b}}) \right) 
            &= \bfG_r^{<d}((x_{\ww{a}} * x_{\ww{a}'}) * y_{\ww{b}}) \quad &&\text{(by \eqref{fifth-relation})}  \\
            &= \bfG_r^{<d}(x_{\ww{a}} * x_{\ww{a}'}) \bfG_r^{<d}(y_{\ww{b}}) \quad &&\text{(by \ref{proof-ii})}  \\
            &= \bfG_r^{<d}(x_{\ww{a}}) \bfG_r^{<d}(x_{\ww{a}'}) \bfG_r^{<d}(y_{\ww{b}}) \quad &&\text{(by \ref{proof-iii})} \\
            &= \bfG_r^{<d}(x_{\ww{a}}) \bfG_r^{<d}(x_{\ww{a}'}  y_{\ww{b}}) \quad &&\text{(by \ref{proof-ii} and \eqref{second-relation})}.
        \end{alignat*}
        Now, we write
        \[
        y_{\ww{b}}*y_{\ww{b}'}
        =\sum_i \epsilon_i x_{\ww{v}_i} y_{\ww{w}_i}
        \]
        for some $\epsilon_i \in \Fp$ and indices $\ww{v}_i,\ww{w}_i$. 
        Then we obtain
        \begin{alignat*}{2}
            &\bfG_r^{<d} \left( (x_{\ww{a}}y_{\ww{b}}) *(x_{\ww{a}'}y_{\ww{b}'}) \right)  \\
            ={} &\bfG_r^{<d} \left( (x_{\ww{a}} * x_{\ww{a}'}) * (y_{\ww{b}} * y_{\ww{b}'}) \right) 
            \quad &&\text{(by \eqref{fifth-relation})} \\
            ={} &\sum_{i} \epsilon_i \bfG_r^{<d} \left( (x_{\ww{a}} * x_{\ww{a}'}) * ( x_{\ww{v}_i} y_{\ww{w}_i} ) \right)  \\
            ={} &\sum_{i} \epsilon_i \bfG_r^{<d}(x_{\ww{a}} * x_{\ww{a}'}) \bfG_r^{<d}(x_{\ww{v}_i}y_{\ww{w}_i}) \quad &&\text{(by the argument above)}   \\
            ={} &\bfG_r^{<d}(x_{\ww{a}} * x_{\ww{a}'})\bfG_r^{<d}(y_{\ww{b}} * y_{\ww{b}'}) \\
            ={} &\bfG_r^{<d}(x_{\ww{a}}) \bfG_r^{<d}(x_{\ww{a}'}) \bfG_r^{<d}(y_{\ww{b}})  \bfG_r^{<d}(y_{\ww{b}'}) 
            \quad &&\text{(by \ref{proof-iii} and \ref{proof-iv})} \\
            ={} &\bfG_r^{<d}(x_{\ww{a}}y_{\ww{b}})\bfG_r^{<d}(x_{\ww{a}'}y_{\ww{b}'})
            \quad &&\text{(by \ref{proof-ii})}.
        \end{alignat*}
    \end{enumerate}
\end{proof}

\subsection{The Map \texorpdfstring{$\bfe$}{e}}

We let $\mathfrak{A}$ be the ideal of $\Ecal$ generated by the set
\[
\{(\afk*\bfk)*\cfk-\afk*(\bfk*\cfk) \mid  \afk,\bfk,\cfk\in \mathcal{E}\},
\]
so that $\mathcal{E}/\mathfrak{A}$ is a commutative and associative algebra.

\begin{rem} \label{rmk-ass}
    For $q < 10$ and words $\afk, \bfk, \cfk$ with total weight less than $8$, we have checked the associativity of $\Ecal$ using \texttt{SageMath}.
    Based on these calculations, we conjecture that $\Ecal$ is associative, which will be explored in a future project.
\end{rem}

\begin{df}\label{df-bfe}
    We define $\bfe:\Rcal \to \Ecal/\mathfrak{A}$ to be the unique $\F_p$-linear map such that for any non-empty index $\ww{a}$, 
    \[
    \bfe(1) := 1 \pmod{\mathfrak{A}}
    \quad
    \text{and}
    \quad
    \bfe(x_{\ww{a}}) := x_{\ww{a}}+\sum_{i=1}^{\dep(\ww{a})}x_{\ww{a}^{(i)}}y_{\ww{a}_{(i)}} \pmod{\mathfrak{A}}.
    \]
\end{df}

Our goal is to show that $\bfe$ is an $\Fp$-algebra homomorphism, which is stated in Proposition \ref{prop-bfe-alg-hom}.
We first establish the following lemma:

\begin{lem}\label{lem-(yb)*a=y(b*a)}
    For each $\afk\in \mathcal{R}$, $\bfk\in \mathcal{E}$ and $w\in\bbN$, we have 
    \[
    y_w(\bfk*\afk)=(y_w\bfk)*\afk.
    \]
\end{lem}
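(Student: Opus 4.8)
The plan is to reduce the identity to single basis words and then unwind the defining relations of the shuffle product $*$. Since $*$ is bilinear, left concatenation by $y_w$ is $\Fp$-linear, and $\Rcal$ is spanned by the pure $x$-words, it suffices to treat the case $\afk = x_{\ww{c}}$ and $\bfk = x_{\ww{a}} y_{\ww{b}}$, where I have used that every word of $\Tcal$ has a unique normal form $x_{\ww{a}} y_{\ww{b}}$ because the $x_k$ commute with the $y_\ell$. The central tool will be relation \eqref{fifth-relation}, which decouples the $x$- and $y$-shuffles.

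For the left-hand side, I would first apply \eqref{fifth-relation} with empty second $y$-part, together with \eqref{first-relation}, to get $(x_{\ww{a}} y_{\ww{b}}) * x_{\ww{c}} = (x_{\ww{a}} * x_{\ww{c}}) * y_{\ww{b}}$. Since $\Rcal$ is a $*$-subalgebra of $\Ecal$, I can expand $x_{\ww{a}} * x_{\ww{c}} = \sum_i \epsilon_i x_{\ww{v}_i}$ in pure $x$-words and apply \eqref{second-relation} termwise, obtaining $\bfk * \afk = \sum_i \epsilon_i x_{\ww{v}_i} y_{\ww{b}}$; left concatenation by $y_w$ and the commutativity $y_w x_{\ww{v}_i} = x_{\ww{v}_i} y_w$ then give $y_w(\bfk * \afk) = \sum_i \epsilon_i x_{\ww{v}_i} y_w y_{\ww{b}}$. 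For the right-hand side, commutativity gives $y_w \bfk = x_{\ww{a}} (y_w y_{\ww{b}})$, which is again in normal form with $x$-part $x_{\ww{a}}$ and $y$-part $y_w y_{\ww{b}}$; running the identical computation (the same expansion $x_{\ww{a}} * x_{\ww{c}} = \sum_i \epsilon_i x_{\ww{v}_i}$, then \eqref{second-relation}) produces $(y_w \bfk) * \afk = \sum_i \epsilon_i x_{\ww{v}_i} y_w y_{\ww{b}}$, matching the left-hand side.

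I do not anticipate a serious obstacle: the proof is a direct unwinding of Definition \ref{df-Ecal}. The one point requiring care is that the argument must not invoke associativity of $*$, which is only conjectured in Remark \ref{rmk-ass}. This is automatic here, since every shuffle that occurs has the form $(\text{element of } \Rcal) * y_{\ww{b}}$ and is resolved termwise by \eqref{second-relation}, so no triple $*$-product ever needs rebracketing. The two structural facts that make everything collapse are the separation relation \eqref{fifth-relation} and the commutativity of the $x_k$ with the $y_\ell$, which lets the extra letter $y_w$ slide freely to the left of the $x$-part on both sides.
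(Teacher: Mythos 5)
Your proposal is correct and takes essentially the same route as the paper's own proof: reduce by linearity and the normal form $x_{\ww{a}}y_{\ww{b}}$ to basis words with $\afk=x_{\ww{c}}$, use the decoupling relation \eqref{fifth-relation} to reduce everything to the pure $x$-shuffle $x_{\ww{a}}*x_{\ww{c}}=\sum_i\epsilon_i x_{\ww{v}_i}$, resolve the remaining products termwise via \eqref{second-relation}, and slide $y_w$ across using the commutativity $x_ky_\ell=y_\ell x_k$. Your explicit remark that no rebracketing of a triple $*$-product is ever needed (so the unproven associativity of $\Ecal$ is not invoked) is a point the paper leaves implicit, but the argument is the same.
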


\begin{proof}
    By the distributive law, it suffices to show that for all $w\in \mathbb{N}$ and indices $\ww{a}, \ww{b},\ww{c}$, we have
    \[
    y_w\left((y_{\ww{a}}x_{\ww{b}}\right) * x_{\ww{c}})=(y_wy_{\ww{a}}x_{\ww{b}})*x_{\ww{c}}.
    \]
    Write 
    \[
    x_{\ww{b}}*x_{\ww{c}}=\sum_{i}\epsilon_ix_{\ww{v}_i}
    \]
    for some $\epsilon_i\in \mathbb{F}_p$ and indices $\ww{v}_i$.
    Then we have
    \begin{align*}
        y_w \left((y_{\ww{a}}x_{\ww{b}}) * x_{\ww{c}} \right)
        &= y_w \left( y_{\ww{a}}*(x_{\ww{b}}*x_{\ww{c}}) \right)
        = y_w \left(y_{\ww{a}}\sum_{i}a_ix_{\ww{v}_i}\right)\\
        &=y_wy_{\ww{a}}\left(\sum_{i}a_ix_{\ww{v}_i}\right)=(y_wy_{\ww{a}})*(x_{\ww{b}}*x_{\ww{c}})
        =(y_w y_{\ww{a}}x_{\ww{b}})*x_{\ww{c}}.
    \end{align*}
\end{proof}

\begin{prop}\label{prop-bfe-alg-hom}
    $\bfe:\mathcal{R}\to \mathcal{E}/\mathfrak{A}$ is an $\mathbb{F}_p$-algebra homomorphism.
\end{prop}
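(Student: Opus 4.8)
The plan is to prove the homomorphism identity $\bfe(x_{\ww{a}}\ast x_{\ww{b}})=\bfe(x_{\ww{a}})\ast\bfe(x_{\ww{b}})$ directly, by induction on the total depth $\dep(\ww{a})+\dep(\ww{b})$, matching the recursive shape of $\bfe$ against that of the $q$-shuffle product. The first step is to record a recursion for $\bfe$ itself. Since $x_k$ and $y_\ell$ commute in $\Ecal$, I would separate the $i=0$ summand of Definition \ref{df-bfe} and factor $y_{a_1}$ out of the remaining ones (after reindexing via $\ww{a}_{(i)}=(a_1,(\ww{a}^{(1)})_{(i-1)})$, $\ww{a}^{(i)}=(\ww{a}^{(1)})^{(i-1)}$), obtaining
\[
\bfe(x_{\ww{a}})=x_{\ww{a}}+y_{a_1}\,\bfe(x_{\ww{a}^{(1)}}),
\qquad\text{equivalently}\qquad
\bfe(x_k w)=x_k w+y_k\,\bfe(w)\quad(w\in\Rcal),
\]
the second form following by $\Fp$-linearity. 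The base case of the induction (one of $\ww{a},\ww{b}$ empty) is immediate from $\bfe(1)=1$ and \eqref{first-relation}.

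For the inductive step, with both indices nonempty, I would apply relation \eqref{third-relation} to $x_{\ww{a}}\ast x_{\ww{b}}$, then apply $\bfe$ term by term via the recursion above, and invoke the induction hypothesis on the three lower-depth products $P:=x_{\ww{a}^{(1)}}\ast x_{\ww{b}}$, $Q:=x_{\ww{a}}\ast x_{\ww{b}^{(1)}}$, $R:=x_{\ww{a}^{(1)}}\ast x_{\ww{b}^{(1)}}$, and on $R\ast x_j$ (for the last one the hypothesis is applied to each word of $R$ paired with the letter $x_j$, whose total depth is still $<\dep(\ww{a})+\dep(\ww{b})$, giving $\bfe(R\ast x_j)=(\alpha\ast\beta)\ast(x_j+y_j)$). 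Writing $\alpha:=\bfe(x_{\ww{a}^{(1)}})$, $\beta:=\bfe(x_{\ww{b}^{(1)}})$ and using Lemma \ref{lem-(yb)*a=y(b*a)} to extract the leading $y$'s from every product having one factor in $\Rcal$, the purely ``$x$-headed'' contributions reassemble to $x_{\ww{a}}\ast x_{\ww{b}}$, which also occurs in the expansion $\bfe(x_{\ww{a}})\ast\bfe(x_{\ww{b}})=(x_{\ww{a}}+y_{a_1}\alpha)\ast(x_{\ww{b}}+y_{b_1}\beta)$; after these and the two mixed terms $x_{\ww{a}}\ast y_{b_1}\beta$, $y_{a_1}\alpha\ast x_{\ww{b}}$ cancel on both sides, the entire identity reduces to the single equality
\begin{multline*}
(y_{a_1}\alpha)\ast(y_{b_1}\beta)
= y_{a_1}\bigl(\alpha\ast y_{b_1}\beta\bigr)
+ y_{b_1}\bigl(y_{a_1}\alpha\ast\beta\bigr)
+ y_{a_1+b_1}(\alpha\ast\beta) \\
+ \sum_{\substack{i+j=a_1+b_1\\ q-1\mid j}}\Delta^{i,j}_{a_1,b_1}\,y_i\bigl((\alpha\ast\beta)\ast(x_j+y_j)\bigr)
\end{multline*}
in $\Ecal/\mathfrak{A}$. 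This is the explicit ``$q$-shuffle relation'' for $\bfe$ promised in the introduction.

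To prove this last relation I would discard any reference to $\alpha,\beta$ being images of $\bfe$: as $x$'s and $y$'s commute, every word of $\Ecal$ has the normal form $x_{\ww{c}}y_{\ww{d}}$, so by bilinearity it suffices to treat $\alpha=x_{\ww{c}}y_{\ww{d}}$, $\beta=x_{\ww{e}}y_{\ww{f}}$. Then $y_{a_1}\alpha=x_{\ww{c}}y_{(a_1,\ww{d})}$ and $y_{b_1}\beta=x_{\ww{e}}y_{(b_1,\ww{f})}$; relation \eqref{fifth-relation} rewrites the left-hand side as $(x_{\ww{c}}\ast x_{\ww{e}})\ast(y_{(a_1,\ww{d})}\ast y_{(b_1,\ww{f})})$, and relation \eqref{fourth-relation} expands the pure-$y$ factor into its five terms. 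Pulling the leading $y_{a_1},y_{b_1},y_{a_1+b_1},y_i$ back out through Lemma \ref{lem-(yb)*a=y(b*a)} and recombining the $x$- and $y$-shuffles by \eqref{fifth-relation} recovers the four terms on the right-hand side. The main obstacle---and the only place associativity is used---is the pair of correction ($\Delta$) terms: relation \eqref{fourth-relation} produces the triple products $(y_{\ww{d}}\ast y_{\ww{f}})\ast x_j$ and $(y_{\ww{d}}\ast y_{\ww{f}})\ast y_j$, whereas the target carries $\bigl((x_{\ww{c}}\ast x_{\ww{e}})\ast(y_{\ww{d}}\ast y_{\ww{f}})\bigr)\ast x_j$ and its $y_j$-analogue, so matching them requires reassociating a product of three or more factors. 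This fails in $\Ecal$ but holds in $\Ecal/\mathfrak{A}$, which is precisely why $\bfe$ is valued in the associative quotient. Feeding the displayed relation back into the reduction of the previous paragraph then closes the induction and proves Proposition \ref{prop-bfe-alg-hom}.
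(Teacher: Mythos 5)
Your proposal is correct and follows essentially the same route as the paper: the recursion $\bfe(x_{\ww{a}})=x_{\ww{a}}+y_{a_1}\bfe(x_{\ww{a}^{(1)}})$ is the paper's \eqref{eq-bfe-alg-hom-observation}, your displayed reduction is Proposition \ref{prop-ye(x)*ye(x)}, and your word-level verification for $\alpha=x_{\ww{c}}y_{\ww{d}}$, $\beta=x_{\ww{e}}y_{\ww{f}}$ is exactly Lemma \ref{lem-yx*yx}, including the observation that associativity modulo $\Afk$ is needed only for the $\Delta$-correction terms. The only cosmetic difference is that you absorb the depth-$(1,1)$ case into the general inductive step, whereas the paper computes it explicitly as the base case.
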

\begin{proof}
    We first observe that for all $a\in \mathbb{N}$ and non-empty index $\ww{a}$,
    \begin{equation} \label{eq-bfe-alg-hom-observation}
        \bfe(x_ax_{\ww{a}})=x_ax_{\ww{a}}+y_ax_{\ww{a}}+\sum_{i=1}^{\dep(\ww{a})}y_ax_{\ww{a}^{(i)}}y_{\ww{a}_{(i)}}=x_ax_{\ww{a}}+y_a\bfe(x_{\ww{a}}).
    \end{equation}
    We need to prove
    \[
    \bfe(x_{\ww{a}})*\bfe(x_{\ww{b}})\equiv \bfe(x_{\ww{a}}*x_{\ww{b}})\pmod{\mathfrak{A}}
    \]
    for any indices $\ww{a},\ww{b}$.
    If either $\dep(\ww{a})$ or $\dep(\ww{b})$ is zero, then $\bfe(x_{\ww{a}})=1$ or $\bfe(x_{\ww{b}})=1$, and the result follows.
    Assuming both $\ww{a}$ and $\ww{b}$ are non-empty, we proceed by induction on the total depth $\dep(\ww{a})+\dep(\ww{b})$.
    For the base case $\dep({\ww{a}}) = \dep(\ww{b}) = 1$, one computes that

    \begin{align*}
        \bfe(x_a*x_b)
        &=\begin{multlined}[t]
            x_a x_b+x_b x_a+x_{a+b}+\sum_{\substack{i+j=a+b \\ q-1\mid j}}\Delta^{i,j}_{a,b}x_ix_j+y_ax_b+y_bx_a+y_{a+b} \\
            +\sum_{\substack{i+j=a+b \\ q-1\mid j}}\Delta^{i,j}_{a,b}y_ix_j+y_ay_b+y_by_a+\sum_{\substack{i+j=a+b \\ q-1\mid j}}\Delta^{i,j}_{a,b}y_iy_j \\
        \end{multlined} \\
        &=(x_a*x_b)+(y_b*x_a)+(x_a*y_b)+(y_a*y_b) \\
        &=(x_a+y_a)*(x_b+y_b)  \\
        &=\bfe(x_a)*\bfe(x_b).
    \end{align*}
    
    Given $N>2$, suppose the claim holds for any indices with total depth less than $N$.
    Let $\ww{a}$ and $\ww{b}$ be non-empty indices with $\dep(\ww{a})+\dep(\ww{b})=N$.
    Then
    \begin{align*}
        &\bfe(x_{\ww{a}}) * \bfe(x_{\ww{b}}) \\
        ={} &\left( x_{\ww{a}}+y_{a_1} \bfe(x_{\ww{a}^{(1)}}) \right) * \left(x_{\ww{b}}+y_{b_1}\bfe(x_{\ww{b}^{(1)}}) \right) \quad \text{(by \eqref{eq-bfe-alg-hom-observation})}\\
        ={} &x_{\ww{a}}*x_{\ww{b}}
        +\left( y_{a_1}\bfe(x_{\ww{a}^{(1)}}) \right) * x_{\ww{b}}
        +\left(y_{\ww{b}_1}\bfe(x_{\ww{b}^{(1)}})\right)*x_{\ww{a}}
        +\left( y_{a_1}\bfe(x_{\ww{a}^{(1)}}) \right) * \left(y_{b_1} \bfe(x_{\ww{b}^{(1)}}) \right).
    \end{align*}
    On the other hand,
    \begin{align*}
        &\bfe(x_{\ww{a}} * x_{\ww{b}}) \\
        ={} &\begin{multlined}[t]
            \bfe\left( x_{a_1}(x_{\ww{a}^{(1)}} * x_{\ww{b}}) \right)+\bfe\left( x_{b_1}(x_{\ww{a}}*x_{\ww{b}^{(1)}}) \right)
            +\bfe\left( x_{a_1+b_1}(x_{\ww{a}^{(1)}}*x_{\ww{b}^{(1)}}) \right) \\
            +\sum_{\substack{i+j=a_1+b_1 \\ q-1\mid j}}\Delta^{i,j}_{a_1,b_1}\bfe\left( x_i\left( (x_{\ww{a}^{(1)}}*x_{\ww{b}^{(1)}})*x_j \right) \right) \quad \text{(by \eqref{third-relation})}
        \end{multlined} \\
        ={} &\begin{multlined}[t]
            x_{a_1}(x_{\ww{a}^{(1)}}*x_{\ww{b}})+x_{b_1}(x_{\ww{a}}*x_{\ww{b}^{(1)}})+x_{a_1+b_1}(x_{\ww{a}^{(1)}}*x_{\ww{b}^{(1)}}) \\
            +\sum_{\substack{i+j=a_1+b_1 \\ q-1\mid j}}\Delta^{i,j}_{a_1,b_1}x_i\left( (x_{\ww{a}^{(1)}}*x_{\ww{b}^{(1)}})*x_j \right) \\ +y_{a_1}\bfe(x_{\ww{a}^{(1)}}*x_{\ww{b}})+y_{b_1}\bfe(x_{\ww{a}}*x_{\ww{b}^{(1)}})+y_{a_1+b_1}\bfe(x_{\ww{a}^{(1)}}*x_{\ww{b}^{(1)}}) \\
            +\sum_{\substack{i+j=a_1+b_1 \\ q-1\mid j}}\Delta^{i,j}_{a_1,b_1}y_i\bfe\left( (x_{\ww{a}^{(1)}}*x_{\ww{b}^{(1)}})*x_j \right)
            \quad\text{(by \eqref{eq-bfe-alg-hom-observation})}
        \end{multlined} \\
        ={} &\begin{multlined}[t]
            x_{\ww{a}}*x_{\ww{b}} +y_{a_1}\left(\bfe(x_{\ww{a}^{(1)}})*\bfe(x_{\ww{b}})\right)+y_{b_1}\left(\bfe(x_{\ww{a}})*\bfe(x_{\ww{b}^{(1)}})\right)
            + y_{a_1+b_1}\left(\bfe(x_{\ww{a}^{(1)}})*\bfe(x_{\ww{b}^{(1)}})\right) \\
            +\sum_{\substack{i+j=a_1+b_1 \\ q-1\mid j}}\Delta^{i,j}_{a_1,b_1}y_i\left( \left( \bfe(x_{\ww{a}^{(1)}})*\bfe(x_{\ww{b}^{(1)}})\right) *\bfe(x_j) \right) \\
            \text{(by \eqref{third-relation} and induction hypothesis)}
        \end{multlined} \\
        ={} &\begin{multlined}[t]
            x_{\ww{a}}*x_{\ww{b}}
            +y_{a_1}\left( \bfe(x_{\ww{a}^{(1)}})* \left( x_{\ww{b}}+y_{b_1} \bfe(x_{\ww{b}^{(1)}}) \right) \right)
            +y_{b_1} \left(\bfe(x_{\ww{b}^{(1)}})*\left(x_{\ww{a}}+y_{a_1}\bfe(x_{\ww{a}^{(1)}}) \right)\right) \\
            +y_{a_1+b_1}\bfe(x_{\ww{a}^{(1)}}*x_{\ww{b}^{(1)}})
            +\sum_{\substack{i+j=a_1+b_1 \\ q-1\mid j}}\Delta^{i,j}_{a_1,b_1} y_i\left( \left( \bfe(x_{\ww{a}^{(1)}})*\bfe(x_{\ww{b}^{(1)}})\right) *\bfe(x_j) \right)
            \quad\text{(by \eqref{eq-bfe-alg-hom-observation})}
        \end{multlined} \\
        ={} &\begin{multlined}[t]
            x_{\ww{a}}*x_{\ww{b}}
            +y_{a_1}\left( \bfe(x_{\ww{a}^{(1)}})* x_{\ww{b}}\right)+y_{a_1}\left(\bfe(x_{\ww{a}^{(1)}})*\left(y_{b_1} \bfe(x_{\ww{b}^{(1)}})\right)\right) 
            \\+y_{b_1} \left(\bfe(x_{\ww{b}^{(1)}})*x_{\ww{a}}\right)+y_{b_1}\left(\bfe(x_{\ww{b}^{(1)}})*\left(y_{a_1}\bfe(x_{\ww{a}^{(1)}})\right) \right)
            +y_{a_1+b_1}\bfe(x_{\ww{a}^{(1)}}*x_{\ww{b}^{(1)}}) \\
            +\sum_{\substack{i+j=a_1+b_1 \\ q-1\mid j}}\Delta^{i,j}_{a_1,b_1} y_i\left( \left( \bfe(x_{\ww{a}^{(1)}})*\bfe(x_{\ww{b}^{(1)}})\right) *\bfe(x_j) \right)
        \end{multlined} \\
        ={} &\begin{multlined}[t]
            x_{\ww{a}}*x_{\ww{b}}
            +\left( y_{a_1} \bfe(x_{\ww{a}^{(1)}})\right) * x_{\ww{b}}
            +y_{a_1}\left(\bfe(x_{\ww{a}^{(1)}}) * \left( y_{b_1}\bfe(x_{\ww{b}^{(1)}})\right)\right) \\
            +\left(y_{b_1}\bfe(x_{\ww{b}^{(1)}})\right) * x_{\ww{a}}
            +y_{b_1} \left( \left( y_{a_1}\bfe(x_{\ww{a}^{(1)}}) \right) * \bfe(x_{\ww{b}^{(1)}}) \right)
            +y_{a_1+b_1}\bfe(x_{\ww{a}^{(1)}}*x_{\ww{b}^{(1)}}) \\
            +\sum_{\substack{i+j=a_1+b_1 \\ q-1\mid j}}\Delta^{i,j}_{a_1,b_1} y_i\left( \left( \bfe(x_{\ww{a}^{(1)}})*\bfe(x_{\ww{b}^{(1)}})\right) *\bfe(x_j) \right)
            \quad\text{(by Lemma \ref{lem-(yb)*a=y(b*a)})}.
        \end{multlined}
    \end{align*}
    Therefore, it is reduced to show that
    \begin{align*}
        &\left( y_{a_1}\bfe(x_{\ww{a}^{(1)}})\right) * \left(y_{b_1}\bfe(x_{\ww{b}^{(1)}})\right) \\
        \equiv{} &\begin{multlined}[t]
            y_{a_1}\left(\bfe(x_{\ww{a}^{(1)}}) * \left( y_{b_1}\bfe(x_{\ww{b}^{(1)}})\right)\right)
            +y_{b_1} \left( \left( y_{a_1}\bfe(x_{\ww{a}^{(1)}}) \right) * \bfe(x_{\ww{b}^{(1)}}) \right) \\
            +y_{a_1+b_1}\bfe(x_{\ww{a}^{(1)}}*x_{\ww{b}^{(1)}})
            +\sum_{\substack{i+j=a_1+b_1 \\ q-1\mid j}}\Delta^{i,j}_{a_1,b_1} y_i\left( \left( \bfe(x_{\ww{a}^{(1)}})*\bfe(x_{\ww{b}^{(1)}})\right) *\bfe(x_j) \right)
            \pmod{\mathfrak{A}},
        \end{multlined}
    \end{align*}
    which follows from Proposition \ref{prop-ye(x)*ye(x)} below. 
\end{proof}

Therefore, Proposition \ref{prop-bfe-alg-hom} relies on Proposition \ref{prop-ye(x)*ye(x)} below.
To prove the latter, we begin with the following lemma.

\begin{lem}\label{lem-yx*yx}
    For all indices $\ww{a},\ww{b},\ww{v},\ww{w}$, we have
    \begin{multline*}
        (y_{\ww{a}}x_{\ww{v}})*(y_{\ww{b}}x_{\ww{w}})
        \equiv y_{a_1}\left((y_{\ww{a}^{(1)}}x_{\ww{v}})*(y_{\ww{b}}x_{\ww{w}}) \right)
        + y_{b_1} \left((y_{\ww{a}}x_{\ww{v}})*(y_{\ww{b}^{(1)}}x_{\ww{w}})\right) \\
        + y_{a_1+b_1} \left((y_{\ww{a}^{(1)}}x_{\ww{v}})*(y_{\ww{b}^{(1)}}x_{\ww{w}})\right)
        + \sum_{\substack{i+j=a_1+b_1 \\ q-1 \mid j}} \Delta^{i,j}_{a_1,b_1} y_i \left(\left((y_{\ww{a}^{(1)}}x_{\ww{v}})*(y_{\ww{b}^{(1)}}x_{\ww{w}}) \right)* \bfe(x_j) \right)  \\
        \pmod{\Afk}.
    \end{multline*}
\end{lem}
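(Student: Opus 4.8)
The plan is to reduce the mixed product on the left to a pure-$y$ product times a pure-$x$ factor, expand that pure-$y$ product by its defining recursion \eqref{fourth-relation}, and then match the resulting terms against the right-hand side one at a time, working throughout in the associative and commutative quotient $\Ecal/\Afk$. Note that only the associativity of $\Ecal/\Afk$, which holds by construction, is invoked, not the conjectural associativity of $\Ecal$ itself (Remark \ref{rmk-ass}).

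First I would use the commutativity of letters to write $y_{\ww{a}}x_{\ww{v}} = x_{\ww{v}}y_{\ww{a}}$ and $y_{\ww{b}}x_{\ww{w}} = x_{\ww{w}}y_{\ww{b}}$, and then apply \eqref{fifth-relation} to obtain, exactly in $\Ecal$,
\[
(y_{\ww{a}}x_{\ww{v}}) * (y_{\ww{b}}x_{\ww{w}}) = (x_{\ww{v}} * x_{\ww{w}}) * (y_{\ww{a}} * y_{\ww{b}}) = P * (y_{\ww{a}} * y_{\ww{b}}),
\]
where I set $P := x_{\ww{v}} * x_{\ww{w}} \in \Rcal$. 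Next I would expand $y_{\ww{a}} * y_{\ww{b}}$ by \eqref{fourth-relation}, observing that its two $\Delta$-sums collapse into one: since $\bfe(x_j) = x_j + y_j$ and $*$ is bilinear,
\[
(y_{\ww{a}^{(1)}} * y_{\ww{b}^{(1)}}) * x_j + (y_{\ww{a}^{(1)}} * y_{\ww{b}^{(1)}}) * y_j = (y_{\ww{a}^{(1)}} * y_{\ww{b}^{(1)}}) * \bfe(x_j).
\]
After distributing $P *$ over the four resulting terms, each summand has the form $P * (y_c M)$ with $c \in \{a_1, b_1, a_1+b_1, i\}$; by commutativity of $*$ together with Lemma \ref{lem-(yb)*a=y(b*a)} (applicable because $P \in \Rcal$) I pull $P$ through the leading letter, $P * (y_c M) = y_c(P * M)$, again exactly in $\Ecal$.

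For the first three terms $M$ is a pure-$y$ product, and \eqref{fifth-relation} turns $P * M$ back into a mixed product, e.g.\ $P * (y_{\ww{a}^{(1)}} * y_{\ww{b}}) = (y_{\ww{a}^{(1)}}x_{\ww{v}}) * (y_{\ww{b}}x_{\ww{w}})$, reproducing the first three claimed terms with no error term. The only term needing the quotient is the $\Delta$-sum: here $M = (y_{\ww{a}^{(1)}} * y_{\ww{b}^{(1)}}) * \bfe(x_j)$, and I must regroup
\[
P * \bigl((y_{\ww{a}^{(1)}} * y_{\ww{b}^{(1)}}) * \bfe(x_j)\bigr) \equiv \bigl(P * (y_{\ww{a}^{(1)}} * y_{\ww{b}^{(1)}})\bigr) * \bfe(x_j) \pmod{\Afk}
\]
before applying \eqref{fifth-relation} to identify $P * (y_{\ww{a}^{(1)}} * y_{\ww{b}^{(1)}})$ with $(y_{\ww{a}^{(1)}}x_{\ww{v}}) * (y_{\ww{b}^{(1)}}x_{\ww{w}})$.

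The main obstacle is that this associativity rearrangement takes place \emph{underneath} the prepended letter $y_i$: to pass from a congruence of the bracketed terms to $y_i(P*M) \equiv y_i(\cdots) \pmod{\Afk}$ I need left juxtaposition by $y_i$ to be well defined on $\Ecal/\Afk$, i.e.\ $y_i\Afk \subseteq \Afk$ (the same compatibility already used tacitly in the proof of Proposition \ref{prop-bfe-alg-hom}). I would record this as a preliminary lemma. Using \eqref{fifth-relation} and Lemma \ref{lem-(yb)*a=y(b*a)}, prepending $y_w$ to a product of two monomials satisfies
\[
y_w\bigl((y_{\ww{p}}x_{\ww{q}}) * (y_{\ww{r}}x_{\ww{s}})\bigr) = \bigl(y_w(y_{\ww{p}} * y_{\ww{r}})\bigr) * (x_{\ww{q}} * x_{\ww{s}}),
\]
which reduces the stability of $\Afk$ under $y_w$ to the purely $y$-generated part, where it follows by induction on depth from \eqref{fourth-relation}. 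Granting this, the four matched terms assemble into exactly the stated right-hand side modulo $\Afk$, completing the proof.
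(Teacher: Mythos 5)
Your argument is essentially the paper's own proof: reduce via \eqref{fifth-relation} to $(y_{\ww{a}}*y_{\ww{b}})*(x_{\ww{v}}*x_{\ww{w}})$, expand $y_{\ww{a}}*y_{\ww{b}}$ by \eqref{fourth-relation} with the two $\Delta$-sums merged into a single $\bfe(x_j)$-term, pull the leading $y$-letters out with Lemma \ref{lem-(yb)*a=y(b*a)}, and invoke associativity modulo $\Afk$ only in the $\Delta$-term before converting back with \eqref{fifth-relation}. The one place you go beyond the paper is in flagging that substituting a congruence underneath the prepended letter $y_i$ requires $y_i\Afk\subseteq\Afk$; the paper uses this compatibility tacitly, so recording it as a preliminary lemma is a reasonable refinement rather than a different route, though your sketch of its proof (reduction to the pure-$y$ part and ``induction on depth'') would need to be fleshed out to be fully convincing.
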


\begin{proof}
    We observe that the $q$-shuffle product $y_{\ww{a}} \ast y_{\ww{b}}$ can be written as
    \begin{multline}  \label{eq-rewrite-shuffle-of-y}
        y_{\ww{a}} * y_{\ww{b}} = y_{a_1} ( y_{\ww{a}^{(1)}} * y_{\ww{b}} ) 
        + y_{b_1} ( y_{\ww{a}} * y_{\ww{b}^{(1)}} ) 
        + y_{a_1+b_1} ( y_{\ww{a}^{(1)}} * y_{\ww{b}^{(1)}} ) \\
        + \sum_{\substack{i+j = a_1 + b_1 \\ q-1 \mid  j}} 
        \Delta^{i,j}_{a_1,b_1} y_i \left( ( y_{\ww{a}^{(1)}} * y_{\ww{b}^{(1)}} ) * \bfe(x_j) \right).
    \end{multline}
    Set $\mathfrak{a} := x_{\ww{v}}*x_{\ww{w}}$.
    Then we have
    \begin{align*}
        &(y_{\ww{a}}x_{\ww{v}})*(y_{\ww{b}}x_{\ww{w}})
        = (y_{\ww{a}}*y_{\ww{b}})*\mathfrak{a} \\
        ={} &\begin{multlined}[t]
            \left(y_{a_1}(y_{\ww{a}^{(1)}}*y_{\ww{b}})\right) * \mathfrak{a}
            + \left(y_{b_1}(y_{\ww{a}}*y_{\ww{b}^{(1)}})\right) * \mathfrak{a}
            + \left(y_{a_1+b_1}(y_{\ww{a}^{(1)}}*y_{\ww{b}^{(1)}})\right) * \mathfrak{a} \\
            + \sum_{\substack{i+j=a_1+b_1 \\ q-1\mid j}}\Delta^{i,j}_{a_1,b_1} \left(y_i \left((y_{\ww{a}^{(1)}}*y_{\ww{b}^{(1)}})*\bfe(x_j) \right) \right) * \mathfrak{a} \quad \text{(by \eqref{eq-rewrite-shuffle-of-y})}
        \end{multlined} \\
        ={} &\begin{multlined}[t]
            y_{a_1}\left((y_{\ww{a}^{(1)}}*y_{\ww{b}})*\mathfrak{a}\right)
            + y_{b_1}\left((y_{\ww{a}}*y_{\ww{a}^{(1)}})*\mathfrak{a}\right)
            + y_{a_1+b_1}\left((y_{\ww{a}^{(1)}}*y_{\ww{b}^{(1)}})*\mathfrak{a}\right) \\
            + \sum_{\substack{i+j=a_1+b_1 \\ q-1\mid j}} \Delta^{i,j}_{a_1,b_1} y_i \left(\left((y_{\ww{a}^{(1)}}*y_{\ww{b}^{(1)}})*\bfe(x_j)\right)*\mathfrak{a} \right) \quad\text{(by Lemma \ref{lem-(yb)*a=y(b*a)})}.
        \end{multlined}
    \end{align*}
    For the last term, we have
    \begin{align*}
        \left(\left(y_{\ww{a}^{(1)}}*y_{\ww{b}^{(1)}}\right)*\bfe(x_j)\right) * \mathfrak{a}
        &\equiv (y_{\ww{a}^{(1)}}*y_{\ww{b}^{(1)}})*(\bfe(x_j)*\mathfrak{a}) \\
        &=(y_{\ww{a}^{(1)}}*y_{\ww{b}^{(1)}})*(\mathfrak{a}*\bfe(x_j)) \\
        &\equiv \left((y_{\ww{a}^{(1)}}*y_{\ww{b}^{(1)}})*\afk \right)*\bfe(x_j) \pmod{\mathfrak{A}}.
    \end{align*}
    Hence, by \eqref{fifth-relation},
    \begin{align*}
        &(y_{\ww{a}}x_{\ww{v}})*(y_{\ww{b}}x_{\ww{w}}) \\
        \equiv{} &\begin{multlined}[t]
            y_{a_1}\left((y_{\ww{a}^{(1)}}*y_{\ww{b}})*\mathfrak{a}\right)
            + y_{b_1}\left((y_{\ww{a}}*y_{\ww{b}^{(1)}})*\mathfrak{a}\right)
            + y_{a_1+b_1}\left((y_{\ww{a}^{(1)}}*y_{\ww{b}^{(1)}})*\mathfrak{a}\right) \\
            + \sum_{\substack{i+j=a_1+b_1 \\ q-1\mid j}} \Delta^{i,j}_{a_1,b_1} y_i \left(\left((y_{\ww{a}^{(1)}}*y_{\ww{b}^{(1)}})*\afk \right)*\bfe(x_j)\right).
        \end{multlined}\\
        \equiv{} &\begin{multlined}[t]
            y_{a_1}\left((y_{\ww{a}^{(1)}}x_{\ww{v}})*(y_{\ww{b}}x_{\ww{w}}) \right)
            + y_{b_1} \left((y_{\ww{a}}x_{\ww{v}})*(y_{\ww{b}^{(1)}}x_{\ww{w}})\right) + y_{a_1+b_1} \left((y_{\ww{a}^{(1)}}x_{\ww{v}})*(y_{\ww{b}^{(1)}}x_{\ww{w}})\right) \\
            + \sum_{\substack{i+j=a_1+b_1 \\ q-1 \mid j}} \Delta^{i,j}_{a_1,b_1} y_i \left(\left((y_{\ww{a}^{(1)}}x_{\ww{v}})*(y_{\ww{b}^{(1)}}x_{\ww{w}}) \right)* \bfe(x_j) \right)
            \pmod{\Afk}.
        \end{multlined}
    \end{align*}
\end{proof}

We are now ready to state and prove Proposition \ref{prop-ye(x)*ye(x)}.

\begin{prop}\label{prop-ye(x)*ye(x)}
    For each $a,b\in \mathbb{N}$ and indices $\ww{a},\ww{b}$, we have 
    \begin{multline*}
        \left(y_a\bfe(x_{\ww{a}})\right) * \left(y_b\bfe(x_{\ww{b}})\right)
        \equiv y_a\left(\bfe(x_{\ww{a}})*\left(y_b\bfe(x_{\ww{b}})\right) \right)
        + y_b \left( \left(y_a\bfe(x_{\ww{a}}) \right) * \bfe(x_{\ww{b}}) \right)  \\
        +y_{a+b} \left(\bfe(x_{\ww{a}})*\bfe(x_{\ww{b}}) \right)
        +\sum_{\substack{i+j=a+b \\ q-1\mid j}} \Delta^{i,j}_{a,b} y_i \left( \left(\bfe(x_{\ww{a}}) * \bfe(x_{\ww{b}}) \right) * \bfe(x_j) \right) \pmod{\mathfrak{A}}.
    \end{multline*}
\end{prop}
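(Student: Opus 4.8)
The plan is to reduce the statement to the already-established Lemma \ref{lem-yx*yx} by expanding both factors into words of the special shape $y_{\ww{c}}x_{\ww{v}}$ and applying that lemma term by term. The crucial structural observation is that, by Definition \ref{df-bfe} together with the commutativity relation $x_k y_\ell = y_\ell x_k$ of Definition \ref{df-Ecal}, one has
\[
\bfe(x_{\ww{a}}) = \sum_{i=0}^{\dep(\ww{a})} x_{\ww{a}^{(i)}} y_{\ww{a}_{(i)}},
\]
so that
\[
y_a\bfe(x_{\ww{a}}) = \sum_{i=0}^{\dep(\ww{a})} y_{(a,\ww{a}_{(i)})} x_{\ww{a}^{(i)}}
\]
is an $\Fp$-linear combination of words $y_{\ww{c}}x_{\ww{v}}$ in which every $\ww{c}$ has first coordinate equal to $a$. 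I would record this as $y_a\bfe(x_{\ww{a}}) = \sum_\alpha y_{\ww{c}_\alpha}x_{\ww{v}_\alpha}$ with $(\ww{c}_\alpha)_1 = a$, and similarly $y_b\bfe(x_{\ww{b}}) = \sum_\beta y_{\ww{d}_\beta}x_{\ww{w}_\beta}$ with $(\ww{d}_\beta)_1 = b$. Stripping the leading $y_a$ (resp. $y_b$) returns the original factor: by the same commutativity relation,
\[
\sum_\alpha y_{\ww{c}_\alpha^{(1)}}x_{\ww{v}_\alpha} = \bfe(x_{\ww{a}})
\qquad\text{and}\qquad
\sum_\beta y_{\ww{d}_\beta^{(1)}}x_{\ww{w}_\beta} = \bfe(x_{\ww{b}}).
\]

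Next I would apply Lemma \ref{lem-yx*yx} to each individual product $(y_{\ww{c}_\alpha}x_{\ww{v}_\alpha})*(y_{\ww{d}_\beta}x_{\ww{w}_\beta})$. Because $(\ww{c}_\alpha)_1 = a$ and $(\ww{d}_\beta)_1 = b$ for every pair $(\alpha,\beta)$, the leading symbols $y_a$, $y_b$, $y_{a+b}$, $y_i$ and the scalars $\Delta^{i,j}_{a,b}$ produced by the lemma are identical across all terms; only the inner products vary. Summing the lemma over all $(\alpha,\beta)$ and invoking the $\Fp$-bilinearity of $*$ (Definition \ref{df-Ecal}(vi)) together with the $\Fp$-linearity of left multiplication by a fixed $y_c$ and of the operation $(-)*\bfe(x_j)$ — all of which respect the congruence mod $\Afk$ since $\Afk$ is an ideal — the double sums factor. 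For instance, the first batch of terms collects into
\[
\sum_{\alpha,\beta} y_a\big((y_{\ww{c}_\alpha^{(1)}}x_{\ww{v}_\alpha})*(y_{\ww{d}_\beta}x_{\ww{w}_\beta})\big)
= y_a\Big(\big(\textstyle\sum_\alpha y_{\ww{c}_\alpha^{(1)}}x_{\ww{v}_\alpha}\big)*\big(\textstyle\sum_\beta y_{\ww{d}_\beta}x_{\ww{w}_\beta}\big)\Big)
= y_a\big(\bfe(x_{\ww{a}})*(y_b\bfe(x_{\ww{b}}))\big),
\]
which is precisely the first term on the right-hand side of the proposition.

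The remaining three batches are handled identically: the $y_{b_1}$-terms collapse to $y_b\big((y_a\bfe(x_{\ww{a}}))*\bfe(x_{\ww{b}})\big)$, the $y_{a_1+b_1}$-terms to $y_{a+b}\big(\bfe(x_{\ww{a}})*\bfe(x_{\ww{b}})\big)$, and the $\Delta^{i,j}_{a,b}$-sum to $\sum_{i+j=a+b,\,q-1\mid j}\Delta^{i,j}_{a,b}\,y_i\big((\bfe(x_{\ww{a}})*\bfe(x_{\ww{b}}))*\bfe(x_j)\big)$, using the two displayed deletion identities above to recognize $\sum_\alpha y_{\ww{c}_\alpha^{(1)}}x_{\ww{v}_\alpha} = \bfe(x_{\ww{a}})$ and $\sum_\beta y_{\ww{d}_\beta^{(1)}}x_{\ww{w}_\beta} = \bfe(x_{\ww{b}})$ wherever a leading $y$ has been stripped, and the plain factorizations $\sum_\alpha y_{\ww{c}_\alpha}x_{\ww{v}_\alpha}=y_a\bfe(x_{\ww{a}})$, $\sum_\beta y_{\ww{d}_\beta}x_{\ww{w}_\beta}=y_b\bfe(x_{\ww{b}})$ otherwise. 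Since the left-hand side $\sum_{\alpha,\beta}(y_{\ww{c}_\alpha}x_{\ww{v}_\alpha})*(y_{\ww{d}_\beta}x_{\ww{w}_\beta})$ equals $(y_a\bfe(x_{\ww{a}}))*(y_b\bfe(x_{\ww{b}}))$ by bilinearity, this yields the claimed congruence. I expect no serious obstacle here: Lemma \ref{lem-yx*yx} has already absorbed all of the associativity arguments modulo $\Afk$, so the only real work is the bookkeeping of the double-sum factorization and the verification of the deletion identity, both of which rest solely on the commutativity relation $x_ky_\ell=y_\ell x_k$ and the bilinearity of $*$.
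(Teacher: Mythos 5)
Your proposal is correct and is essentially the paper's own proof: both expand $y_a\bfe(x_{\ww{a}})$ and $y_b\bfe(x_{\ww{b}})$ via Definition \ref{df-bfe} into sums of words $y_{\ww{c}}x_{\ww{v}}$ whose leading $y$-indices are $a$ and $b$, apply Lemma \ref{lem-yx*yx} to each pairwise product, and recollect by bilinearity modulo $\Afk$. The only difference is that you make explicit the ``deletion identities'' used to recognize $\bfe(x_{\ww{a}})$ and $\bfe(x_{\ww{b}})$ after stripping the leading $y$, which the paper leaves implicit in its final appeal to Definition \ref{df-bfe}.
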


\begin{proof}
    We see that
    \begin{align*}
        &(y_a\bfe(x_{\ww{a}}))*(y_b\bfe(x_{\ww{b}}))  \\
        ={} &\left(\sum_{k=0}^{\dep(\ww{a})}y_ay_{\ww{a}_{(k)}}x_{\ww{a}^{(k)}}\right)*\left(\sum_{\ell=0}^{\dep(\ww{b})}y_by_{\ww{b}_{(\ell)}}x_{\ww{b}^{(\ell)}}\right) \quad\text{(by Definition \ref{df-bfe})} \\
        ={} &\sum_{k,\ell}(y_ay_{\ww{a}_{(k)}}x_{\ww{a}^{(k)}})*(y_by_{\ww{b}_{(\ell)}}x_{\ww{b}^{(\ell)}}) \\
        \equiv{}  &\begin{multlined}[t]
            \sum_{k,\ell}y_a\left((y_{\ww{a}_{(k)}}x_{\ww{a}^{(k)}})*(y_by_{\ww{b}_{(\ell)}}x_{\ww{b}^{(\ell)}})\right)
            +\sum_{k,\ell}y_b \left((y_ay_{\ww{a}_{(k)}}x_{\ww{a}^{(k)}})*(y_{\ww{b}_{(\ell)}}x_{\ww{b}^{(\ell)}}) \right) \\
            +\sum_{k,\ell}y_{a+b}\left((y_{\ww{a}_{(k)}}x_{\ww{a}^{(k)}})*(y_{\ww{b}_{(\ell)}} x_{\ww{b}^{(\ell)}})\right)
            +\sum_{k,\ell}\sum_{\substack{i+j=a+b \\ q-1\mid j}}\Delta^{i,j}_{a,b}y_i\left(\left((y_{\ww{a}_{(k)}}x_{\ww{a}^{(k)}})*(y_{\ww{b}_{(\ell)}}x_{\ww{b}^{(\ell)}})\right)*\bfe(x_j) \right) \\
            \text{(by Lemma \ref{lem-yx*yx})}
        \end{multlined} \\
        ={} &\begin{multlined}[t]
            y_a \left( \bfe(x_{\ww{a}}) * (y_b\bfe(x_{\ww{b}}) )\right) 
            + y_b \left( (y_a\bfe(x_{\ww{a}}) ) * \bfe(x_{\ww{b}}) \right)
            +y_{a+b}\left(\bfe(x_{\ww{a}})*\bfe(x_{\ww{b}})\right) \\ +\sum_{\substack{i+j=a+b \\ q-1\mid j}}\Delta^{i,j}_{a,b}y_i \left( \left( \bfe(x_{\ww{a}}) * \bfe(x_{\ww{b}}) \right) * \bfe(x_j) \right) \pmod{\mathfrak{A}} \quad\text{(by Definition \ref{df-bfe})}.
        \end{multlined}
    \end{align*}
\end{proof}

\subsection{The Final Step}

We are in the position to prove Theorem \ref{thm-main-theorem}, which is restated as follows.

\begin{thm} \label{thm-main-theorem-restate}
    For $r\ge 1$, we define $\bfE_r: \Rcal \to \Ocal(\Omega^r)$ to be the unique $\FF_p$-linear map satisfying  
    \[
    \bfE_r(1) := 1
    \quad
    \text{and}
    \quad
    \bfE_r(x_{\ww{a}}) := E_r(\ww{a};\bfz).
    \]
    Then $\bfE_r$ is an $\FF_p$-algebra homomorphism, i.e.,
    \[
    \bfE_r(x_{\ww{a}} \ast x_{\ww{b}}) = E_r(\ww{a};\bfz) E_r(\ww{b};\bfz).
    \]
\end{thm}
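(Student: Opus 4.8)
The plan is to prove the theorem by induction on the rank $r$, realizing the factorization $\bfE_r = \bfG_r \circ \bfe$ foreshadowed in the introduction. The base case $r=1$ is exactly Theorem \ref{thm-zeta-alg-hom}: under the identification $\Omega^1 = \{1\}$ we have $E_1(\ww{a};\bfz) = \zeta_A(\ww{a})$, so $\bfE_1 = \bfzeta_A$ is an $\Fp$-algebra homomorphism by Shi's result. For the inductive step I would fix $r \geq 2$ and assume that $\bfE_{r-1} : \Rcal \to \Ocal(\Omega^{r-1})$ is an $\Fp$-algebra homomorphism.

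The first substantive point is that $\bfG_r : \Ecal \to \Ocal(\Omega^r)$ descends to the quotient $\Ecal/\Afk$. Under the induction hypothesis, Proposition \ref{prop-bfG-ring-hom} gives that $\bfG_r$ is an $\Fp$-algebra homomorphism. Because its target $\Ocal(\Omega^r)$ is a \emph{commutative and associative} $\Fp$-algebra, each generator of $\Afk$ is sent to
\[
\bfG_r\big((\afk*\bfk)*\cfk\big) - \bfG_r\big(\afk*(\bfk*\cfk)\big) = \bfG_r(\afk)\bfG_r(\bfk)\bfG_r(\cfk) - \bfG_r(\afk)\bfG_r(\bfk)\bfG_r(\cfk) = 0,
\]
so $\bfG_r$ annihilates the whole ideal $\Afk$ and induces an $\Fp$-algebra homomorphism $\Ecal/\Afk \to \Ocal(\Omega^r)$, which I will still denote $\bfG_r$. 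I would emphasize this step, because it is precisely where associativity is used: we need it only in the \emph{target}, where it holds automatically, and thus never have to appeal to the (merely conjectural, see Remark \ref{rmk-ass}) associativity of $\Ecal$ itself.

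Next I would verify the identity $\bfE_r = \bfG_r \circ \bfe$ as maps $\Rcal \to \Ocal(\Omega^r)$. By $\Fp$-linearity it suffices to check this on each word $x_{\ww{a}}$ with $\ww{a} = (a_1,\ldots,a_m)$ non-empty. Applying $\bfG_r$ to the representative $\bfe(x_{\ww{a}}) \equiv x_{\ww{a}} + \sum_{i=1}^{m} x_{\ww{a}^{(i)}} y_{\ww{a}_{(i)}} \pmod{\Afk}$ from Definition \ref{df-bfe} and unwinding Definition \ref{df-bfG} (recalling $\bfG_r(x_{\ww{a}}) = E_{r-1}(\ww{a};\bfz')$ and $\bfG_r(x_{\ww{a}^{(i)}} y_{\ww{a}_{(i)}}) = E_{r-1}(\ww{a}^{(i)};\bfz')\,G_r(\ww{a}_{(i)};\bfz)$) yields
\[
\bfG_r(\bfe(x_{\ww{a}})) = E_{r-1}(\ww{a};\bfz') + \sum_{i=1}^{m} E_{r-1}(\ww{a}^{(i)};\bfz')\, G_r(\ww{a}_{(i)};\bfz),
\]
which is exactly the expansion of $E_r(\ww{a};\bfz)$ recorded in \eqref{eq-restate-t-expansion}. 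Hence $\bfG_r \circ \bfe$ and $\bfE_r$ agree on every $x_{\ww{a}}$, and therefore coincide.

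Finally I would combine the pieces: Proposition \ref{prop-bfe-alg-hom} says $\bfe : \Rcal \to \Ecal/\Afk$ is an $\Fp$-algebra homomorphism, and $\bfG_r : \Ecal/\Afk \to \Ocal(\Omega^r)$ is one by the second paragraph, so the composite $\bfE_r = \bfG_r \circ \bfe$ is an $\Fp$-algebra homomorphism, closing the induction. The genuine difficulty of the theorem lives entirely in the two auxiliary propositions, so this final step is mostly assembly; the one place that rewards care is the well-definedness of the descended map $\bfG_r$ on $\Ecal/\Afk$, which I would spell out explicitly rather than leave implicit, since it is the hinge that makes the factorization through the associator quotient legitimate.
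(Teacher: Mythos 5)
Your proposal is correct and follows essentially the same route as the paper: induction on the rank $r$ with Theorem \ref{thm-zeta-alg-hom} as the base case, the factorization $\bfE_r = \bfG_r \circ \bfe$ through $\Ecal/\Afk$ via \eqref{eq-restate-t-expansion}, and an appeal to Propositions \ref{prop-bfG-ring-hom} and \ref{prop-bfe-alg-hom}. Your explicit verification that $\bfG_r$ annihilates the generators of $\Afk$ (using associativity of the target $\Ocal(\Omega^r)$ rather than of $\Ecal$) is exactly the point the paper treats implicitly, so spelling it out is a welcome but not substantively different addition.
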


\begin{proof}
    We proceed by induction on the rank $r$.
    For the base case $r=1$, since $\bfE_1=\bfzeta_A$, the result follows from Theorem \ref{thm-zeta-alg-hom}.
    Let $r \geq 2$ and assume the result holds for $r-1$, i.e., $\bfE_{r-1}$ is an $\mathbb{F}_p$-algebra homomorphism.
    Since the $\Fp$-algebra $\Ocal(\Omega^r)$ is associative, the map $\bfG_r: \Ecal \to \mathcal{O}(\Omega^r)$ factors through $\mathcal{E} \to \mathcal{E}/\mathfrak{A}$, which is still denoted by $\bfG_r$.
    Then one sees by Definitions \ref{df-bfG}, \ref{df-bfe}, and \eqref{eq-restate-t-expansion} that $\bfE_r$ factors as
    \[
    \bfE_r = \bfG_r \circ \bfe: \Rcal \to \Ecal/\mathfrak{A} \to \mathcal{O}(\Omega^r).
    \]
    Hence, the result follows from Propositions \ref{prop-bfG-ring-hom} and \ref{prop-bfe-alg-hom}.
\end{proof}

\begin{rem}
    It is interesting to explore whether linear relations among multiple zeta values can be lifted to the corresponding multiple Eisenstein series.
    On the other hand, we also expect that the $q$-shuffle algebra $\Ecal$ possesses rich algebraic structures, which could provide higher insights in the study of multiple zeta values. These aspects will be investigated in future work. 
\end{rem}

\subsection*{Acknowledgments}

The authors thank Chieh-Yu Chang for suggesting this project, as well as for his careful reading of the manuscript and insightful remarks, which have greatly contributed to the development of this work.
They further express their sincere gratitude to \textit{The 17th MSJ-SI Developments of Multiple Zeta Values Conference 2025} for providing an excellent environment that fostered fruitful discussions.
The authors extend special thanks to the National Science and Technology Council for its financial support over the past few years.

\printbibliography

@article{Chen2015,
	author = {Huei-Jeng Chen},
	title = {On shuffle of double zeta values over {$\mathbb{F}_q[t]$}},
	journal = {J. Number Theory},
	year = {2015},
	volume = {148},
	pages = {153--163},
}

@article{Chen2017,
	author = {Huei-Jeng Chen},
	title = {On shuffle of double Eisenstein series in positive characteristic},
	journal = {J. Th{\'e}or. Nombres Bordeaux},
	year = {2017},
	volume = {29},
	number = {3},
	pages = {815--825},
}

@article{Gos1980,
	author = {David Goss},
	title = {The algebraist's upper half-plane},
	journal = {Bull. Amer. Math. Soc. (N.S.)},
	year = {1980},
	volume = {2},
	number = {3},
	pages = {391--415},
}

@article{thakur2009finite,
	author = {Dinesh S. Thakur},
	title = {Power sums with applications to multizeta and zeta zero
	distribution for {$\Fq[t]$}},
	journal = {Finite Fields Appl.},
	year = {2009},
	volume = {15},
	pages = {534--552},
}

@article{thakur2010shuffle,
	author = {Dinesh S. Thakur},
	title = {Shuffle relations for function field multizeta values},
	journal = {Int. Math. Res. Not. IMRN},
	year = {2010},
	volume = {2010},
	number = {11},
	pages = {1973--1980},
}

@phdthesis{Shi2018,
	author = {Shuhui Shi},
	title = {Multiple zeta values over {$\mathbb{F}_q[t]$}},
	school = {University of Rochester},
	year = {2018},
}

@book{thakur2004function,
	title={Function Field Arithmetic},
	author={Dinesh S. Thakur},
	year={2004},
	publisher={World Scientific Publishing Co. Pte. Ltd.},
}

@article{carlitz1935oncertain,
	title={On certain functions connected with polynomials in a Galois field},
	author={Leonard Carlitz},
	journal={Duke Math. J.},
	year={1935},
	volume={1},
	pages={137--168},
}

@article{Drinfeld1974,
	author    = {Vladimir G. Drinfeld},
	title     = {Elliptic modules},
	journal   = {Math. USSR Sb.},
	volume    = {23},
	number    = {4},
	pages     = {561--592},
	year      = {1974},
}

@article{bt2018double,
	author    = {Henrik Bachmann and Koji Tasaka},
	title     = {The double shuffle relations for multiple Eisenstein series},
	journal   = {Nagoya Math. J.},
	volume    = {230},
	pages     = {180--212},
	year      = {2018},
}

@article{bachmann2019algebra,
	author    = {Henrik Bachmann},
	title     = {The algebra of bi-brackets and regularized multiple Eisenstein series},
	journal   = {J. Number Theory},
	volume    = {200},
	pages     = {260--294},
	year      = {2019},
}

@mastersthesis{bachmann2012multiple,
	author = {Henrik Bachmann},
	title = {Multiple Zeta-Werte und die Verbindung zu Modulformen durch multiple Eisensteinreihen},
	school = {Universität Hamburg},
	year = {2012},
}

@inproceedings{gkz2006double,
	author = {Herbert Gangl and Masanobu Kaneko and Don Zagier},
	title = {Double zeta values and modular forms},
	booktitle = {Automorphic Forms and Zeta Functions},
	pages = {71--106},
	year = {2006},
	publisher = {World Scientific},
}

@book{goss1996basic,
	title={Basic Structures of Function Field Arithmetic},
	author={David Goss},
	series={Ergeb. Math. Grenzgeb. (3)},
	volume={35},
	year={1996},
	publisher={Springer-Verlag Berlin Heidelberg}
}

@article{gekeler1988coefficients,
	title = {On the coefficients of Drinfeld modular forms},
	author = {Ernst-Ulrich Gekeler},
	pages = {667--700},
	volume = {93},
	journal={Invent. Math.},
	year = {1988},
}

@book{fresnelvanderput2004rigid,
	title={Rigid Analytic Geometry and Its Applications},
	author={Jean Fresnel and Marius van der Put},
	series={Progr. Math.},
	volume={218},
	year={2004},
	publisher={Birkhäuser Boston, MA},
}

@article{anderson2009multizeta,
	author    = {Greg W. Anderson and Dinesh S. Thakur},
	title     = {Multizeta values for $\mathbb{F}_q[t]$, their period interpretation and relations between them},
	journal   = {Int. Math. Res. Not. IMRN},
	volume    = {2009},
	number    = {11},
	pages     = {2038--2055},
	year      = {2009},
}

@book{bbp2024drinfeld,
  title={Drinfeld Modular Forms of Arbitrary Rank},
  author={Dirk Basson and Florian Breuer and Richard Pink},
  publisher={American Mathematical Society},
  year={2024},
}

@article{chang2014linear,
	title={Linear independence of monomials of multizeta values in positive characteristic},
	author={Chieh-Yu Chang},
	journal={Compos. Math.},
	year={2014},
	volume={150},
	issue={11},
	pages={1789--1808},
}

\end{document}